\documentclass[10pt,reqno,final]{article}

\usepackage{amsmath,amsfonts,amssymb,amsthm,version}
\usepackage{mathrsfs,fancybox,pifont}
\usepackage{graphicx}
\usepackage{url,hyperref}
\usepackage[notcite,notref]{showkeys}
\usepackage{color}
\usepackage{subfigure,multirow}
\usepackage{amsmath,amssymb}
\usepackage{graphicx}
\usepackage{float}
\usepackage{booktabs}
\usepackage{multirow}
\usepackage{tabularx}
\usepackage{bm}
\usepackage{subfigure,multirow}
\usepackage{epstopdf}
\usepackage{cases}
\usepackage{mathtools}
\usepackage{algorithm,algorithmic}
\usepackage{authblk}
\usepackage{lipsum}

\allowdisplaybreaks

\theoremstyle{plain}
\newtheorem{theorem}{Theorem}[section]

\newtheorem{assumption}{Assumption}[section]

\theoremstyle{definition}

\theoremstyle{remark}
\newtheorem{remark}{Remark}[section]

\title{A Regularized Auxiliary Variable (RAV) Approach for Gradient Flows}
\author{Zhaoyang Wang\thanks{School of Mathematical Sciences, Laboratory of Mathematics and Complex Systems, MOE, Beijing Normal University, Beijing 100875, China; Research Center for Mathematics, Advanced Institute of Natural Sciences, Beijing Normal University, Zhuhai, Guangdong 519087, China. (zhaoyang584520@163.com)} \, and
Ping Lin\thanks{Corresponding author. Division of Mathematics, University of Dundee, Dundee DD1 4HN, United Kingdom (p.lin@dundee.ac.uk)}}

\affil{}

\date{}

\begin{document}
\maketitle

\begin{abstract}
In this paper, we propose a regularized auxiliary variable (RAV) approach and construct accurate and robust time-discrete schemes for a large class of gradient flows. By introducing an auxiliary variable $r=0$ and constructing an auxiliary equation that naturally fits into the energy relation, the numerical solution $r^{n+1}$ of the auxiliary variable is corrected at each time step to preserve consistency with the original system. The developed RAV scheme satisfies unconditional energy stability with respect to the original variables, and in certain cases the original energy law can be directly recovered. Furthermore, we obtain a uniform bound on the norm of the numerical solution, which allows us to establish the optimal error estimate in $L^\infty(0,T;H^2)$ for the second-order scheme without any restriction on the time step. We present ample numerical results, including comparisons with the scalar auxiliary variable (SAV) approach, to demonstrate the accuracy and effectiveness of the proposed RAV approach.

\medskip
\noindent{\bf Keywords}: Gradient flows, regularization method, auxiliary variables, stability, error analysis

\medskip
\end{abstract}

\section{Introduction}
\label{section1}
Gradient flows are driven by free energy and constitute a class of evolutionary models that are ubiquitous in the natural sciences and engineering. Many important partial differential equations, whether describing interfacial evolution \cite{lowengrub1998quasi, allen1979microscopic, wang2025stability, anderson1998diffuse}, thin film dynamics \cite{chen2008phase, shen2012second}, or the evolution of polymer and liquid crystal systems \cite{lin2006simulations, wang2025class}, can be interpreted as gradient flows endowed with an energy dissipation structure.

From the perspective of model construction, gradient flows are typically derived from the total free energy and its variational formulation, subject to the constraint imposed by the second law of thermodynamics. Consider the free energy functional $E[\phi(\bm{x})]=\frac{1}{2}(\phi, \mathcal{L}\phi)+(F(\phi),1)$, where $\mathcal{L}$ is a symmetric non-negative linear operator and $F$ is the energy density function. Based on this formulation, the general structure of the gradient flow can be expressed as
\begin{equation}\label{the1-1}
\begin{split}
&\frac{\partial \phi}{\partial t}=\mathcal{G}\mu, \\
&\mu=\frac{\delta E}{\delta \phi}=\mathcal{L}\phi+F'(\phi),
\end{split}
\end{equation}
with suitable boundary conditions. Here, $\mathcal{G}$ is a non-positive symmetric operator that characterizes the dissipative mechanism of the system. For instance, $\mathcal{G}=-I$ corresponds to the $L^2$ gradient flow, while $\mathcal{G}=\Delta$ corresponds to the $H^{-1}$ gradient flow. The gradient flow system admits the following energy dissipation law:
\begin{equation}\label{the1-2}
\begin{split}
\frac{d}{dt}E[\phi]=(\frac{\delta E}{\delta \phi}, \frac{\partial \phi}{\partial t})=(\mu, \mathcal{G}\mu)\leq 0.
\end{split}
\end{equation}
Therefore, the development of efficient and accurate numerical schemes that preserve this energy dissipation law is of critical importance. 

In recent years, the SAV approach \cite{shen2019new, shen2018scalar} has become increasingly popular due to its ability to construct efficient and unconditionally energy stable schemes for a large class of gradient flows. The key idea is to introduce a scalar variable $r(t)=\sqrt{\int_\Omega F(\phi) d\bm{x}+C_0}$ ($C_0$ is chosen to ensure that $\int_\Omega F(\phi) d\bm{x}+C_0>0$) to obtain an auxiliary ordinary differential equation (ODE)
\begin{equation}\label{the1-3}
\begin{split}
\frac{d}{dt}r(t)=\frac{1}{2\sqrt{\int_\Omega F(\phi) d\bm{x}+C_0}}\int_\Omega  F'(\phi)\frac{\partial \phi}{\partial t} \ d\bm{x}.
\end{split}
\end{equation}

Let $T$ be the final time of computation and $N$ a positive integer. By setting $\Delta t=T/N$, and letting $(\cdot)^{n}$ denote the numerical approximation of a specific variable at $t=n\Delta t$ for $n\leq N$, the first-order scheme is then constructed with the nonlinear term $F(\phi)$ treated explicitly, and is given by
\begin{equation}\label{the1-4}
\begin{split}
&\frac{\phi^{n+1}-\phi^n}{\Delta t}=\mathcal{G}\mu^{n+1}, \\
&\mu^{n+1}=\mathcal{L}{\phi^{n+1}}+\frac{r^{n+1}}{\sqrt{\int_\Omega F(\phi^n) \ d\bm{x}+C_0}}F'(\phi^n),\\
&\frac{r^{n+1}-r^n}{\Delta t}=\frac{1}{2\sqrt{\int_\Omega F(\phi^n) d\bm{x}+C_0}}\int_\Omega F'(\phi^n) \frac{\phi^{n+1}-\phi^{n}}{\Delta t}\ d\bm{x}.
\end{split}
\end{equation}
One obtains the modified discrete energy dissipation law:
\begin{equation}\label{the1-5}
\begin{split}
\frac{1}{2}(\phi^{n+1},\mathcal{L}\phi^{n+1})+|r^{n+1}|^2-\left(\frac{1}{2}(\phi^{n},\mathcal{L}\phi^{n})+|r^{n}|^2\right)\leq 0.
\end{split}
\end{equation}
Furthermore, it requires solving only linear systems with constant coefficients at each time step. Thanks to these advantages, the SAV method has been successfully applied to a wide range of gradient flow problems \cite{wang2021second, yang2021novel, yang2022linear, feng2021high, crouseilles2025semi}.

As a variant of the SAV approach, a generalized SAV (GSAV) method \cite{huang2022new} capable of achieving high-order accuracy has been proposed, which employs the dynamical equation (\ref{the1-2}) to ensure energy stability. By setting $r(t)=E[\phi]+C_0>0$, the BDF2 GSAV scheme is as follows:
\begin{equation}\label{the1-6}
\begin{split}
&\frac{3\overline{\phi}^{n+1}-4\phi^n+2\phi^{n-1}}{2\Delta t}=\mathcal{G}\mu^{n+1}, \ \mu^{n+1}=\mathcal{L}{\phi^{n+1}}+F'(2\overline{\phi}^{n}-\overline{\phi}^{n-1}),\\
&\frac{r^{n+1}-r^n}{\Delta t}=\frac{r^{n+1}}{E[\overline{\phi}^{n+1}]+C_0}(\mu^{n+1}, \mathcal{G}\mu^{n+1}),\\
&\xi^{n+1}=\frac{r^{n+1}}{E[\overline{\phi}^{n+1}]+C_0}, \\
&\phi^{n+1}=\eta^{n+1}\overline{\phi}^{n+1} \ \text{with} \ \eta^{n+1}=\xi^{n+1}(2-\xi^{n+1}).
\end{split}
\end{equation}
A more general modified energy law can be readily derived: $r^{n+1}-r^{n}\leq 0$.

It can be seen from (\ref{the1-3}) that the SAV method directly replaces the algebraic expression of the auxiliary variable with its time derivative to formulate an ODE. This direct reduction of the index via differentiation may lead to the drift problem (see e.g. \cite{lin1997sequential}), in which the original algebraic form of the auxiliary variable cannot be accurately maintained as time goes on. This inconsistency introduces errors during the computation, and although the schemes preserve the modified energy law, they do not necessarily preserve the original energy law. The relaxation strategy proposed in \cite{jiang2022improving} and \cite{zhang2022generalized} can effectively remedy such inconsistency after each computational step. However, the schemes (\ref{the1-4}) and (\ref{the1-6}) are not fully consistent with the original equation, since the term $\frac{r^{n+1}}{\sqrt{\int_\Omega F(\phi^n) \ d\bm{x}+C_0}}$ in (\ref{the1-4}) or $\frac{r^{n+1}}{E[\overline{\phi}^{n+1}]+C_0}$ in (\ref{the1-6}) does not approximate $1$ accurately enough (especially when $\Delta t$ is large). This leads to the computed original variable $\phi^{n+1}$ and the original energy $E(\phi^{n+1})$ being numerically inaccurate. 

Inspired by the regularization method for the Navier-Stokes equations \cite{lin1997sequential, wanglin2026}, we propose a regularized auxiliary variable (RAV) approach to construct efficient and accurate energy stable schemes for general gradient flow problems. Specifically, we set the auxiliary variable $r=0$ and use the energy relation to perform verification and correction at each time step in the numerical computation, so that the analytic relation can be enforced in a stable manner under the premise of ensuring the energy stability of the scheme. Since the constructed RAV scheme preserves, as much as possible, the consistency between the auxiliary variable $r^{n+1}$ and its original value $0$, it is more temporally stable in the differential–algebraic context than direct index reduction by differentiation. Furthermore, the scheme only requires solving one linear system with constant coefficients at each time step, and its computational cost is the same as that of the implicit–explicit (IMEX) scheme (\ref{the1-6}). More importantly, we are able to establish optimal error estimates in $L^\infty(0,T;H^2)$ without any restriction on the time step. To the best of our knowledge, this is the first result of such an IMEX scheme for gradient flows.

The rest of paper is organized as follows. In Section \ref{section2}, we construct second-order and higher-order RAV schemes and rigorously prove their unconditional stability. In Section \ref{section3}, we carry out a rigorous error analysis for the second‑order RAV scheme without any restriction on the time step. In Section \ref{section4}, various numerical examples are presented to demonstrate the performance of the proposed method and to provide comparisons with the SAV method. Some conclusions and remarks are given in Section \ref{section5}.

\subsection*{Notation}
For domain $\Omega$ in $\mathbb{R}^{d} \ (d=2,3)$ and $1\leq p\leq\infty$, we use the standard notation for the Banach space $L^{p}(\Omega)$ and the Sobolev space $W^{k,p}(\Omega)$ or $H^p(\Omega)$ and $H^p_0(\Omega)$. The symbol $(\cdot,\cdot)$ indicates the standard scalar product in $L^2$. Throughout this paper, the letter $C$ denotes a generic positive constant, with or without subscript, its value may change from one line of an estimate to the next. We will write the dependence of the constant on parameters explicitly if it is essential.

\section{The regularized auxiliary variable (RAV) approach and its schemes}
\label{section2}
In this section, we first propose the RAV approach for the gradient flow of a single function and construct the corresponding second-order and higher-order schemes, and then extend it to gradient flows involving several functions. 

To clearly illustrate the RAV method, we consider here a typical free energy functional 
\begin{equation}\nonumber
\begin{split}
E[\phi]=\int_\Omega \left(\frac{\epsilon^2}{2}\left|\nabla \phi\right|^2+F(\phi)\right)\ d\bm{x},
\end{split}
\end{equation}
and the corresponding gradient flow in $H^{-1}$:
\begin{equation}\label{the2-1}
\begin{split}
&\frac{\partial \phi}{\partial t}=\Delta \mu,\\
&\mu=\frac{\delta E}{\delta \phi}=-\epsilon^2\Delta \phi+F'(\phi),
\end{split}
\end{equation}
subject to either periodic boundary conditions or $\frac{\partial \phi}{\partial \bm{n}}|_{\partial\Omega}=\frac{\partial \mu}{\partial \bm{n}}|_{\partial\Omega}=0$.

\subsection{Gradient flows of a single function}
Based on the ideas of auxiliary variables \cite{shen2018scalar, shen2019new} and regularization method \cite{lin1997sequential, wanglin2026}, let $r(t)$ be a time-dependent auxiliary variable with $r|_{t=0}=0$. We then consider the following equation:
\begin{equation}\label{the2-2}
\begin{split}
 \frac{d}{dt}r(t)=\int_\Omega F'(\phi)\phi_t \ d\bm{x}-\frac{d}{dt}\left(\int_\Omega  F(\phi) \ d\bm{x}\right)\equiv 0.
\end{split}
\end{equation}
It is easy to see that $r\equiv 0$ for $t>0$. We assume that the energy $E[\phi]$ is bounded from below, i.e., $\tilde{E}[\phi]=E[\phi]+C_0>0$. The system (\ref{the2-1}) can be equivalently reformulated as:
\begin{subequations}\label{the2-3}
	\begin{align}
&\frac{\partial \phi}{\partial t}=\Delta \mu,\\
&\mu=-\epsilon^2\Delta \phi+F'(\overline{\phi}), \\
&\frac{d}{dt}r(t)=\int_\Omega F'(\overline{\phi})\phi_t \ d\bm{x}-\frac{d}{dt}\left(\int_\Omega  F(\phi) \ d\bm{x}\right), \label{the2-3c}\\
&\overline{\phi}=\xi\phi=\frac{\tilde{E}[\phi]+r}{\tilde{E}[\phi]}\phi.
\end{align}
\end{subequations}
The second-order midpoint scheme is given by:
\begin{subequations}\label{the2-4}
	\begin{align}
&\frac{\phi^{n+1}-\phi^{n}}{\Delta t}=\Delta \mu^{n+\frac{1}{2}}, \label{the2-4a}\\
&\mu^{n+\frac{1}{2}}=-\epsilon^2\Delta \phi^{n+\frac{1}{2}}+\lambda \phi^{n+\frac{1}{2}}+F'((\overline{\phi}^*)^{n+\frac{1}{2}})-\lambda(\overline{\phi}^*)^{n+\frac{1}{2}}, \label{the2-4b}\\
&r^{n+1} = 
\begin{cases}  
0, & Q^{n+1} \geq 0, \\ 
Q^{n+1}, & Q^{n+1} < 0, 
\end{cases} \label{the2-4c}\\
&\overline{\phi}^{n+1}=\xi^{n+1}\phi^{n+1}:=\left(\frac{\tilde{E}[\phi^{n+1}]+r^{n+1}}{\tilde{E}[\phi^{n+1}]}\right)\phi^{n+1}, \label{the2-4d}
\end{align}
\end{subequations}
where $\lambda>0 $ is a stabilization parameter \cite{chen2018regularized}, and $(\overline{\phi}^*)^{n+\frac{1}{2}}=\frac{3}{2}\overline{\phi}^n-\frac{1}{2}\overline{\phi}^{n-1}$. The numerical variable
\begin{equation}\nonumber
\begin{split}
&Q^{n+1}=r^n+\int_\Omega \left(\left(F'((\overline{\phi}^*)^{n+\frac{1}{2}})-\lambda(\overline{\phi}^*)^{n+\frac{1}{2}}\right)(\phi^{n+1}-\phi^n)\right) \ d\bm{x}\\
&-\left(\int_\Omega (F(\phi^{n+1})-\frac{\lambda}{2}(\phi^{n+1})^2) \ d\bm{x}-\int_\Omega (F(\phi^{n})-\frac{\lambda}{2}(\phi^{n})^2) \ d\bm{x}\right)+\Delta t\Vert \nabla\mu^{n+\frac{1}{2}}\Vert_{L^2}^2,
\end{split}
\end{equation}
with $r^{0}=0$. This second-order scheme requires initialization by a first-order scheme for the first step, and efficiently implemented according to the following steps. Given $\phi^0, \phi^1, ...,\phi^n$, $\overline{\phi}^0, \overline{\phi}^1, ..., \overline{\phi}^n$, and $r^0, r^1, ..., r^n$, we compute $\phi^{n+1}$, $\overline{\phi}^{n+1}$ and $r^{n+1}$:

\textbullet \ \textbf{Step 1}: Compute the solution $(\phi^{n+1}, \mu^{n+\frac{1}{2}})$ by using (\ref{the2-4a}) and (\ref{the2-4b}):
\[ 
\begin{pmatrix}
\frac{1}{\Delta t}I & -\Delta \\ 
-\frac{\lambda}{2}I+\frac{\epsilon^2}{2}\Delta & I
\end{pmatrix} 
\begin{pmatrix} 
\phi^{n+1} \\ 
\mu^{n+\frac{1}{2}} 
\end{pmatrix}
=
\begin{pmatrix} 
\frac{1}{\Delta t}\phi^{n} \\ 
\frac{\lambda}{2}\phi^n-\frac{\epsilon^2}{2}\phi^n+F'((\overline{\phi}^*)^{n+\frac{1}{2}})-\lambda(\overline{\phi}^*)^{n+\frac{1}{2}}
\end{pmatrix}.
\]

\textbullet \ \textbf{Step 2}: Compute $Q^{n+1}$ and subsequently obtain $r^{n+1}$ and $\overline{\phi}^{n+1}=\xi^{n+1}\phi^{n+1}:=\left(\frac{\tilde{E}[\phi^{n+1}]+r^{n+1}}{\tilde{E}[\phi^{n+1}]}\right)\phi^{n+1}$.

\begin{remark}
Following the Baumgarte stabilization formulation for differential-algebraic equations (DAEs) \cite{baumgarte1972stabilization, lin1997sequential} and the relaxation strategy for gradient flows \cite{zhang2022generalized, jiang2022improving}, we introduce a dissipation term $\Vert \nabla\mu^{n+\frac{1}{2}}\Vert_{L^2}^2$ into $Q^{n+1}$, which ensures numerical stability (see Theorems \ref{theorem2-2} and \ref{theorem2-3}) while keeping $r^{n+1}$ as close as possible to its original value $0$ when $Q^{n+1}<0$. It can be observed that the non‑positive auxiliary variable $r^{n+1}$ depends on the sign of $Q^{n+1}$, and its deviation from the original value $0$ does not accumulate significantly over the entire computational time interval as in the SAV method. Therefore, the RAV scheme is able to better preserve the structure of the original system.
\end{remark}

\begin{theorem}
\label{theorem2-2}
(Energy stability)
For $Q^{n+1}\geq 0$, the second-order RAV scheme (\ref{the2-4}) is unconditionally energy stable in the sense that the following discrete energy law holds:
\begin{equation}\label{the2-5}
\begin{split}
E[\phi^{n+1}]-(E[\phi^{n}]+r^{n})=E[\phi^{n+1}]+r^{n+1}-(E[\phi^{n}]+r^{n})\leq 0.
\end{split}
\end{equation}
When $Q^{n+1}<0$, we have
\begin{equation}\label{the2-6}
\begin{split}
E[\phi^{n+1}]+r^{n+1}-(E[\phi^{n}]+r^{n})= 0.
\end{split}
\end{equation}
Moreover, if $Q^{n+1}-r^{n}\geq 0$, the scheme satisfies the original energy dissipation law $E[\phi^{n+1}]-E[\phi^{n}]\leq 0$.
\end{theorem}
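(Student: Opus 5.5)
The plan is to derive one exact identity linking $E[\phi^{n+1}]-E[\phi^n]$ to $Q^{n+1}$, and then read off the three claims from the case definition (\ref{the2-4c}) of $r^{n+1}$.

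\textbf{Step 1 (energy identity).} Take the $L^2$ inner product of (\ref{the2-4a}) with $\Delta t\,\mu^{n+\frac12}$. Integrating by parts, with the periodic or homogeneous Neumann boundary conditions, gives
\[
(\phi^{n+1}-\phi^n,\mu^{n+\frac12})=-\Delta t\Vert\nabla\mu^{n+\frac12}\Vert_{L^2}^2 .
\]
Next take the inner product of (\ref{the2-4b}) with $\phi^{n+1}-\phi^n$. Since $\phi^{n+\frac12}=\frac12(\phi^{n+1}+\phi^n)$, the symmetric terms telescope exactly:
\[
-\epsilon^2(\Delta\phi^{n+\frac12},\phi^{n+1}-\phi^n)=\frac{\epsilon^2}{2}\left(\Vert\nabla\phi^{n+1}\Vert^2-\Vert\nabla\phi^n\Vert^2\right),
\]
\[
\lambda(\phi^{n+\frac12},\phi^{n+1}-\phi^n)=\frac{\lambda}{2}\left(\Vert\phi^{n+1}\Vert^2-\Vert\phi^n\Vert^2\right).
\]
Combining these, we obtain
\[
\frac{\epsilon^2}{2}\left(\Vert\nabla\phi^{n+1}\Vert^2-\Vert\nabla\phi^n\Vert^2\right)+\frac{\lambda}{2}\left(\Vert\phi^{n+1}\Vert^2-\Vert\phi^n\Vert^2\right)+\int_\Omega\left(F'((\overline{\phi}^*)^{n+\frac12})-\lambda(\overline{\phi}^*)^{n+\frac12}\right)(\phi^{n+1}-\phi^n)\,d\bm{x}=-\Delta t\Vert\nabla\mu^{n+\frac12}\Vert^2 .
\]

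\textbf{Step 2 (linking to $Q^{n+1}$).} Use the definition of $Q^{n+1}$ to replace the explicit nonlinear integral by
\[
Q^{n+1}-r^n+\int_\Omega\left(F(\phi^{n+1})-\tfrac{\lambda}{2}(\phi^{n+1})^2\right)-\int_\Omega\left(F(\phi^n)-\tfrac{\lambda}{2}(\phi^n)^2\right)-\Delta t\Vert\nabla\mu^{n+\frac12}\Vert^2 .
\]
The $\frac{\lambda}{2}\Vert\phi\Vert^2$ terms cancel, and so do the two dissipation terms. This leaves the key identity
\[
E[\phi^{n+1}]-E[\phi^n]+Q^{n+1}-r^n=0,\qquad\text{i.e.}\qquad E[\phi^{n+1}]+Q^{n+1}=E[\phi^n]+r^n .
\]
This bookkeeping is the only real content of the proof. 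The main point to watch is that the dissipation term added into $Q^{n+1}$ exactly cancels the one produced by testing (\ref{the2-4a}).

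\textbf{Step 3 (cases).} If $Q^{n+1}<0$, then $r^{n+1}=Q^{n+1}$, and the identity is exactly (\ref{the2-6}). If $Q^{n+1}\ge0$, then $r^{n+1}=0$, so
\[
E[\phi^{n+1}]+r^{n+1}-(E[\phi^n]+r^n)=E[\phi^{n+1}]-(E[\phi^n]+r^n)=-Q^{n+1}\le0,
\]
which is (\ref{the2-5}). Finally, if $Q^{n+1}-r^n\ge0$, the identity gives directly $E[\phi^{n+1}]-E[\phi^n]=-(Q^{n+1}-r^n)\le0$, which is the original dissipation law. Since nothing in the argument depends on $\Delta t$, the stability is unconditional.
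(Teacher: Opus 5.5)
Your proposal is correct and follows essentially the same route as the paper. Like the paper, you test (\ref{the2-4a}) with $\Delta t\,\mu^{n+\frac12}$ and (\ref{the2-4b}) with $\phi^{n+1}-\phi^n$ to obtain (\ref{the2-7}), use the definition of $Q^{n+1}$ to rewrite it as $E[\phi^{n+1}]-(E[\phi^n]+r^n)+Q^{n+1}=0$ (i.e. (\ref{the2-8})), and then read off all three claims from the case definition (\ref{the2-4c}), while spelling out the cancellation of the $\frac{\lambda}{2}\Vert\phi\Vert_{L^2}^2$ and dissipation terms that the paper leaves implicit.
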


\begin{proof}
Taking the inner product of (\ref{the2-4a}) and (\ref{the2-4b}) with $\Delta t\mu^{n+\frac{1}{2}}$ and $\phi^{n+1}-\phi^n$, respectively, we obtain
\begin{equation}\label{the2-7}
\begin{split}
&\frac{\lambda}{2}\left(\Vert\phi^{n+1}\Vert_{L^2}^2-\Vert\phi^n\Vert_{L^2}^2\right)+\frac{\epsilon^2}{2}\left(\Vert \nabla \phi^{n+1}\Vert_{L^2}^2-\Vert \nabla \phi^{n}\Vert_{L^2}^2\right)\\
&+\int_\Omega \left(\left(F'((\overline{\phi}^*)^{n+\frac{1}{2}})-\lambda(\overline{\phi}^*)^{n+\frac{1}{2}}\right)(\phi^{n+1}-\phi^n)\right) \ d\bm{x}=-\Delta t\Vert \nabla \mu^{n+\frac{1}{2}}\Vert_{L^2}^2.
\end{split}
\end{equation}
It follows that
\begin{equation}\label{the2-8}
\begin{split}
E[\phi^{n+1}]-(E[\phi^{n}]+r^{n})+Q^{n+1}=0.
\end{split}
\end{equation}
We can directly derive (\ref{the2-5}) and (\ref{the2-6}) with the help of (\ref{the2-4c}). Furthermore, the original energy dissipation law is obtained in the case $Q^{n+1}-r^{n}\geq 0$.
\end{proof}

\begin{theorem}
\label{theorem2-3}
For the modified energy $\tilde{E}[\phi^{n}]+r^{n}=E[\phi^{n}]+r^{n}+C_0$, we have
\begin{equation}\label{the2-9}
\begin{split}
0<\tilde{E}[\phi^{n}]+r^{n}\leq \tilde{E}[\phi^{0}],
\end{split}
\end{equation}
and $0\leq \xi^{n}\leq 1$. Furthermore, if $\int_\Omega F(\phi) \ d\bm{x}$ is bounded from below, there exists constants $M>0$ and $M_T>0$ such that
\begin{equation}\label{the2-10}
\begin{split}
\Vert \overline{\phi}^{n}\Vert_{H^1}\leq M, \quad \Vert \phi^{n}\Vert_{H^2}+\Vert \overline{\phi}^{n}\Vert_{H^2}\leq M_T,
\end{split}
\end{equation}
where $M$ is independent of $T$, and $M_T$ may depend on $T$.
\end{theorem}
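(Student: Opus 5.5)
Our plan is to first establish \eqref{the2-9} by induction on $n$, using Theorem \ref{theorem2-2}. From \eqref{the2-8} and the definition \eqref{the2-4c} we obtain in both cases
\begin{equation}\nonumber
E[\phi^{n+1}]+r^{n+1}=E[\phi^{n}]+r^n-Q^{n+1}+r^{n+1}=E[\phi^{n}]+r^n-\max(Q^{n+1},0)\leq E[\phi^n]+r^n,
\end{equation}
so telescoping and $r^0=0$ give $\tilde{E}[\phi^{n}]+r^{n}\leq \tilde{E}[\phi^0]$. For positivity we use $r^{n+1}\le 0$ and treat the two cases separately. If $Q^{n+1}\geq 0$, then $r^{n+1}=0$ and $\tilde E[\phi^{n+1}]+r^{n+1}=\tilde E[\phi^{n+1}]>0$ by the assumption $\tilde E>0$. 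If $Q^{n+1}<0$, then $\tilde E[\phi^{n+1}]+r^{n+1}=\tilde E[\phi^n]+r^n>0$ by the induction hypothesis. Since $r^{n}\le 0$ and both numerator and denominator are positive, $\xi^{n}=(\tilde E[\phi^n]+r^n)/\tilde E[\phi^n]\in(0,1]$, which gives $0\le\xi^n\le1$.

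For the $H^1$ bound on $\overline\phi^n$ we exploit the scaling. We have $\Vert\overline\phi^n\Vert_{H^1}=\xi^n\Vert\phi^n\Vert_{H^1}$. Since $\int_\Omega F\,d\bm x\ge -C_1$, the energy gives $\frac{\epsilon^2}{2}\Vert\nabla\phi^n\Vert^2\le E[\phi^n]+C_1$. Hence
\begin{equation}\nonumber
\xi^n\Vert\nabla\phi^n\Vert^2\le \frac{2}{\epsilon^2}\,\xi^n\bigl(\tilde E[\phi^n]+C_1-C_0\bigr)\le \frac{2}{\epsilon^2}\bigl(\tilde E[\phi^n]+r^n\bigr)\Bigl(1+\frac{|C_1-C_0|}{\tilde E[\phi^n]}\Bigr).
\end{equation}
This quantity is bounded if $\tilde E[\phi^n]$ is bounded away from zero, which we arrange by choosing $C_0$ large (e.g.\ $C_0\ge C_1+1$, so that $\tilde E\ge1$). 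Then $(\xi^n)^2\Vert\nabla\phi^n\Vert^2\le\xi^n\Vert\nabla\phi^n\Vert^2\le C\tilde E[\phi^0]$. The $L^2$ part requires mass conservation: integrating \eqref{the2-4a} gives $\int\phi^n=\int\phi^0$ under either boundary condition. Poincar\'e--Wirtinger then controls $\Vert\phi^n\Vert_{L^2}$ by $\Vert\nabla\phi^n\Vert$ plus the mean, and multiplying by $\xi^n\le1$ yields a bound $M$ independent of $n$ and $T$.

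For the $H^2$ bound, which we expect to be the main obstacle, the difficulty is that $\phi^n$ itself is not controlled by the modified energy, and $\xi^n$ could be small. We would test \eqref{the2-4a} with $\Delta^2\phi^{n+\frac12}$, or equivalently test against $-\Delta(\phi^{n+1}-\phi^n)$ after applying the scheme structure, giving
\begin{equation}\nonumber
\tfrac{\epsilon^2}{2}\bigl(\Vert\Delta\phi^{n+1}\Vert^2-\Vert\Delta\phi^n\Vert^2\bigr)+\Delta t\,\epsilon^2\Vert\nabla\Delta\phi^{n+\frac12}\Vert^2\le \Delta t\,\bigl|(\nabla g^{n+\frac12},\nabla\Delta\phi^{n+\frac12})\bigr|+\dots,
\end{equation}
where $g=F'(\overline\phi^*)-\lambda\overline\phi^*$ depends only on $\overline\phi^{n},\overline\phi^{n-1}$. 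This is the key point: the nonlinearity is evaluated at the regularized variables, which are uniformly bounded in $H^1$. Under a growth assumption on $F'$ (polynomial of degree at most $3$ for $d\le3$), Sobolev embedding gives $\Vert\nabla g\Vert\le C(M)(1+\Vert\overline\phi\Vert_{H^2}^{\alpha})$ with $\alpha<1$ via Gagliardo--Nirenberg, or we bound it by $C(M)\Vert\Delta\overline\phi\Vert$. Young's inequality absorbs $\nabla\Delta\phi^{n+\frac12}$ into the dissipation, and since $\Vert\Delta\overline\phi^k\Vert\le\Vert\Delta\phi^k\Vert$ we are left with $\Vert\Delta\phi^{n+1}\Vert^2\le\Vert\Delta\phi^n\Vert^2+C\Delta t(1+\Vert\Delta\phi^n\Vert^2+\Vert\Delta\phi^{n-1}\Vert^2)$. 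The discrete Gronwall lemma then gives $\Vert\phi^n\Vert_{H^2}\le M_T$ growing like $e^{CT}$, with no step-size restriction because the right side is fully explicit. Combining this with $\xi^n\le1$ gives the bound for $\overline\phi^n$. The delicate part is handling the $\lambda$ terms and the nonlinear gradient estimate sharply enough that the constants depend only on $M$ and not on $\Vert\phi^n\Vert_{H^1}$.
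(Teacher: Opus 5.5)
Your treatment of \eqref{the2-9}, of $0\le\xi^n\le 1$ and of the uniform $H^1$ bound is essentially the paper's argument: scaling by $\xi^n$, the identity $\xi^n\tilde{E}[\phi^n]=\tilde{E}[\phi^n]+r^n\le\tilde{E}[\phi^0]$, mass conservation and Poincar\'e.

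\textbf{The gap is in the $H^2$ step.} The inequality you display has the structure of the $H^2$ estimate for an $L^2$ (Allen--Cahn type) flow, not for the $H^{-1}$ flow \eqref{the2-4a}. Write $g^{n+\frac12}=F'((\overline{\phi}^*)^{n+\frac{1}{2}})-\lambda(\overline{\phi}^*)^{n+\frac{1}{2}}$. Testing the combined equation with $\Delta t\,\Delta^2\phi^{n+\frac12}$ gives
\[
\tfrac12\bigl(\Vert\Delta\phi^{n+1}\Vert_{L^2}^2-\Vert\Delta\phi^{n}\Vert_{L^2}^2\bigr)+\epsilon^2\Delta t\Vert\Delta^2\phi^{n+\frac12}\Vert_{L^2}^2+\lambda\Delta t\Vert\nabla\Delta\phi^{n+\frac12}\Vert_{L^2}^2=\Delta t\bigl(\Delta g^{n+\frac12},\Delta^2\phi^{n+\frac12}\bigr).
\]
Your right-hand side $\Delta t(\nabla g^{n+\frac12},\nabla\Delta\phi^{n+\frac12})$ and your dissipation $\epsilon^2\Delta t\Vert\nabla\Delta\phi^{n+\frac12}\Vert_{L^2}^2$ instead come from testing with $-\Delta t\,\Delta\phi^{n+\frac12}$. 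The time-difference term of that identity is $\tfrac12(\Vert\nabla\phi^{n+1}\Vert_{L^2}^2-\Vert\nabla\phi^{n}\Vert_{L^2}^2)$, so it gives no pointwise $H^2$ bound. Testing \eqref{the2-4b} with $-\Delta(\phi^{n+1}-\phi^n)$ is not equivalent either: it yields $\Delta t\Vert\Delta\mu^{n+\frac12}\Vert_{L^2}^2$ against $\Delta t(\Delta g^{n+\frac12},\Delta\mu^{n+\frac12})$.

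So at the $H^2$ level you must control $\Vert\Delta g^{n+\frac12}\Vert_{L^2}$, which contains $F''(\overline{\phi}^*)\Delta\overline{\phi}^*$ and $F'''(\overline{\phi}^*)|\nabla\overline{\phi}^*|^2$. With only the uniform $H^1$ bound, this is not bounded by $C(M)(1+\Vert\Delta\overline{\phi}^*\Vert_{L^2})$. For cubic $F'$ in $d=3$ it grows like $\Vert\overline{\phi}^*\Vert_{H^2}^2$ by Agmon's inequality, and this is sharp: the profiles $\delta^{-1/2}\psi(x/\delta)$ keep the $H^1$ norm bounded while $\Vert\phi^2\Delta\phi\Vert_{L^2}\sim\Vert\Delta\phi\Vert_{L^2}^2$. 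Hence your ``key point'' and the linear recursion for $\Vert\Delta\phi^n\Vert_{L^2}^2$ do not follow, and a quadratic recursion only gives a local-in-time bound.

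\textbf{The missing idea, which the paper uses,} is to play the nonlinear term against the highest-order dissipation instead of closing by Gronwall. Using the uniform $H^1$ bound and interpolation (the paper refers to Lemma~2.3 of \cite{shen2018convergence}), one gets \eqref{the2-15}:
\[
\Vert\Delta g^{n+\frac12}\Vert_{L^2}^2\le C+C_\varepsilon\bigl(\Vert\Delta^2\overline{\phi}^n\Vert_{L^2}^2+\Vert\Delta^2\overline{\phi}^{n-1}\Vert_{L^2}^2\bigr)
\]
with $C_\varepsilon$ small. The bound $\xi^k\le 1$ lets you replace $\overline{\phi}^k$ by $\phi^k$, and this term is absorbed into the $\epsilon^2\Delta t\Vert\Delta^2\cdot\Vert_{L^2}^2$ dissipation after summation. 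The lower-order norms then follow from \eqref{the2-7} as in \eqref{the2-18}--\eqref{the2-19}. Be aware that the dissipation controls the midpoint $\phi^{n+\frac12}$, while the right-hand side involves the nodal values $\overline{\phi}^n,\overline{\phi}^{n-1}$, so this absorption step needs care.

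A minor point you share with the paper: $C_0$ enters the scheme through $\xi^n$, so you cannot simply enlarge it. Instead split into two cases. If $\tilde{E}[\phi^n]\ge 1$, your bound applies. If $\tilde{E}[\phi^n]<1$, then $\Vert\nabla\phi^n\Vert_{L^2}$ is bounded directly from $\int_\Omega F\,d\bm{x}\ge -C_1$.
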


\begin{proof}
By Theorem \ref{theorem2-2}, we directly obtain $\tilde{E}[\phi^{n}]+r^{n}\leq \tilde{E}[\phi^{0}]$. Given $\tilde{E}[\phi^{n}]+r^{n}>0$, we have $\tilde{E}[\phi^{n+1}]+r^{n+1}=\tilde{E}[\phi^{n+1}]>0$ for $Q^{n+1}\geq 0$, and $\tilde{E}[\phi^{n+1}]+r^{n+1}=\tilde{E}[\phi^{n}]+r^{n}>0$ for $Q^{n+1}<0$.

Note that $r^{n}\leq 0$, so we have
\begin{equation}\label{the2-11}
\begin{split}
0\leq \xi^{n}= \frac{\tilde{E}[\phi^{n}]+r^{n}}{\tilde{E}[\phi^{n}]}\leq 1.
\end{split}
\end{equation}

Here, we assume $\int_\Omega F(\phi) \ d\bm{x}+C_0>1$ without loss of generality. It follows from (\ref{the2-4d}) that
\begin{equation}\label{the2-12}
\begin{split}
&\Vert \nabla \overline{\phi}^n\Vert_{L^2}^2=\left(\frac{\tilde{E}[\phi^{n}]+r^{n}}{\frac{\epsilon^2}{2}\Vert \nabla {\phi}^n\Vert_{L^2}^2+\int_\Omega F({\phi}^n) \ d\bm{x}+C_0}\right)^2\Vert \nabla \phi^n\Vert_{L^2}^2\\
&\leq \left(\frac{\tilde{E}[\phi^{0}]}{\frac{\epsilon^2}{2}\Vert\nabla \phi^n\Vert_{L^2}^2+1}\right)^2\Vert\nabla \phi^n\Vert_{L^2}^2\leq \frac{2}{\epsilon^2}(\tilde{E}[\phi^{0}])^2.
\end{split}
\end{equation}
Note that $|\int_\Omega \overline{\phi}^n \ d\bm{x}|=\xi^n|\int_\Omega \phi^n \ d\bm{x}|\leq |\int_\Omega \phi^0 \ d\bm{x}|$. By the Poincar\'{e} inequality,
\begin{equation}\label{the2-13}
\begin{split}
\Vert \overline{\phi}^n\Vert_{L^2}^2\leq C_\Omega\left( \Vert \nabla \overline{\phi}^n\Vert_{L^2}^2+\left|\int_\Omega \overline{\phi}^n \ d\bm{x}\right|\right)\leq C_\Omega\left( \Vert \nabla \overline{\phi}^n\Vert_{L^2}^2+\left|\int_\Omega \phi^0 \ d\bm{x}\right|\right).
\end{split}
\end{equation}
Therefore, we have $\Vert \overline{\phi}^{n}\Vert_{H^1}\leq M$.

Following arguments of Lemma 2.3 in \cite{shen2018convergence}, we can obtain 
\begin{equation}\label{the2-14}
\begin{split}
\Vert \nabla\left(F'((\overline{\phi}^*)^{n+\frac{1}{2}})-\lambda(\overline{\phi}^*)^{n+\frac{1}{2}}\right)\Vert_{L^2}^2\leq C+C_\varepsilon\left(\Vert \nabla \Delta\overline{\phi}^n\Vert_{L^2}^2+\Vert \nabla \Delta\overline{\phi}^{n-1}\Vert_{L^2}^2\right),
\end{split}
\end{equation}
and 
\begin{equation}\label{the2-15}
\begin{split}
\Vert \Delta \left(F'((\overline{\phi}^*)^{n+\frac{1}{2}})-\lambda(\overline{\phi}^*)^{n+\frac{1}{2}}\right)\Vert_{L^2}^2\leq C+C_\varepsilon\left(\Vert  \Delta^2\overline{\phi}^n\Vert_{L^2}^2+\Vert \Delta^2\overline{\phi}^{n-1}\Vert_{L^2}^2\right).
\end{split}
\end{equation}
Here, $C_\varepsilon$ is a positive constant sufficiently small.

Combining (\ref{the2-4a}) with (\ref{the2-4b}), and taking the inner product with $\Delta t\Delta^2(\phi^{n+1}+\phi^{n})$ leads to
\begin{equation}\label{the2-16}
\begin{split}
&\Vert \Delta \phi^{n+1}\Vert_{L^2}^2-\Vert \Delta \phi^{n}\Vert_{L^2}^2+\frac{\lambda\Delta t}{2}\left(\Vert \nabla\Delta \phi^{n+1}\Vert_{L^2}^2+\Vert \nabla\Delta \phi^{n}\Vert_{L^2}^2\right)+\frac{\epsilon^2\Delta t}{2}\Vert \Delta^2 (\phi^{n+1}+\phi^{n})\Vert_{L^2}^2\\
&\leq \Delta t\left|\left(\Delta(F'((\overline{\phi}^*)^{n+\frac{1}{2}})-\lambda(\overline{\phi}^*)^{n+\frac{1}{2}}), \Delta^2(\phi^{n+1}+\phi^{n})\right)\right|\\
&\leq C\Delta t+C_\varepsilon\Delta t(\Vert  \Delta^2\overline{\phi}^n\Vert_{L^2}^2+\Vert \Delta^2\overline{\phi}^{n-1}\Vert_{L^2}^2)+\frac{\epsilon^2\Delta t}{4}\Vert \Delta^2 (\phi^{n+1}+\phi^{n})\Vert_{L^2}^2.
\end{split}
\end{equation}
Summing over $n$, it follows that
\begin{equation}\label{the2-17}
\begin{split}
\Vert \Delta \phi^{n+1}\Vert_{L^2}^2+\Delta t\sum\limits_{k=0}^{n}\Vert \nabla\Delta \phi^{k+1}\Vert_{L^2}^2\leq M_T.
\end{split}
\end{equation}

Note that
\begin{equation}\label{the2-18}
\begin{split}
\int_\Omega &\left(\left(F'((\overline{\phi}^*)^{n+\frac{1}{2}})-\lambda(\overline{\phi}^*)^{n+\frac{1}{2}}\right)(\phi^{n+1}-\phi^n)\right) \ d\bm{x}\\
&\leq \frac{\Delta t}{2}\Vert \nabla \mu^{n+\frac{1}{2}}\Vert_{L^2}^2+C\Delta t\left(1+\Vert \nabla \Delta\overline{\phi}^n\Vert_{L^2}^2+\Vert \nabla \Delta\overline{\phi}^{n-1}\Vert_{L^2}^2\right).
\end{split}
\end{equation}
By (\ref{the2-7}), we estimate
\begin{equation}\label{the2-19}
\begin{split}
&\frac{\lambda}{2}\Vert\phi^{n+1}\Vert_{L^2}^2+\frac{\epsilon^2}{2}\Vert \nabla \phi^{n+1}\Vert_{L^2}^2+\frac{\Delta t}{2}\sum\limits_{k=0}^{n}\Vert \nabla \mu^{k+\frac{1}{2}}\Vert_{L^2}^2\\
&\leq C+C\Delta t\left(\sum\limits_{k=0}^{n}\Vert \nabla\Delta \phi^{k}\Vert_{L^2}^2+\sum\limits_{k=0}^{n-1}\Vert \nabla\Delta \phi^{k}\Vert_{L^2}^2\right)\\
&\leq M_T,
\end{split}
\end{equation}
which gives
\begin{equation}\label{the2-20}
\begin{split}
&\Vert \phi^n\Vert_{H^2}^2\leq M_T,\\
& \Vert \overline{\phi}^n\Vert_{H^2}^2=(\xi^n)^2\Vert \phi^n\Vert_{H^2}^2\leq \Vert \phi^n\Vert_{H^2}^2\leq M_T.
\end{split}
\end{equation}

This completes the proof.
\end{proof}

\begin{remark}
The modified energy $E[\phi^{n}]+r^{n}$ that we define clearly characterizes its relation to the original energy $E[\phi^{n}]$ and yields a uniform bound on $\Vert \overline{\phi}^n\Vert_{H^1}$. Furthermore, the numerical stability derived in Theorem \ref{theorem2-3} is unconditional, which allows us to carry out the error analysis without any restriction on the time step.
\end{remark}

\subsection{Extended to higher-order schemes}
With a minor adjustment to system (\ref{the2-3}), we can construct higher-order BDF-$k$ ($k=3, 4$) schemes.

The $k$th order RAV/BDF-$k$ scheme is given by 
\begin{subequations}\label{the2-21}
	\begin{align}
&\frac{\alpha_k\phi^{n+1}-A_k(\phi^{n})}{\Delta t}=\Delta \mu^{n+1}, \label{the2-21a}\\
&\mu^{n+1}=-\epsilon^2\Delta \phi^{n+1}+\lambda \phi^{n+1}+F'(B_k(\overline{\phi}^n))-\lambda B_k(\overline{\phi}^n), \label{the2-21b}\\
&r^{n+1} = 
\begin{cases}  
0, & U_k^{n+1} \geq 0, \\ 
U_k^{n+1}, & U_k^{n+1} < 0, 
\end{cases} \label{the2-21c}\\
&\overline{\phi}^{n+1}=\xi^{n+1}\phi^{n+1}:=\left(\frac{\tilde{E}[\phi^{n+1}]+r^{n+1}}{\tilde{E}[\phi^{n+1}]}\right)\phi^{n+1}, \label{the2-21d}
\end{align}
\end{subequations}
where $\alpha_k$, $U_k^{n+1}$, operators $A_k$ and $B_k$ are as follows:
\begin{equation}\label{the2-22}
\begin{split}
&\alpha_3=\frac{11}{6}, \ A_3(\phi^n)=3\phi^n-\frac{3}{2}\phi^{n-1}+\frac{1}{3}\phi^{n-2}, \ B_3(\overline{\phi}^n)=3\overline{\phi}^n-3\overline{\phi}^{n-1}+\overline{\phi}^{n-2}, \\
& U_3^{n+1}=\frac{18r^n-9r^{n-1}+2r^{n-2}}{11}+\int_\Omega \left(\mu^{n+1}\cdot\frac{11\phi^{n+1}-18\phi^{n}+9\phi^{n-1}-2\phi^{n-2}}{11}\right) \ d\bm{x}\\
&-\left(\frac{11E[\phi^{n+1}]-18E[\phi^{n}]+9E[\phi^{n-1}]-2E[\phi^{n-2}]}{11}\right)+\frac{6}{11}\Delta t\Vert \nabla\mu^{n+1}\Vert_{L^2}^2,
\end{split}
\end{equation}
and
\begin{equation}\label{the2-23}
\begin{split}
&\alpha_4=\frac{25}{12}, \ A_4(\phi^n)=4\phi^n-3\phi^{n-1}+\frac{4}{3}\phi^{n-2}-\frac{1}{4}\phi^{n-3}, \ B_4(\overline{\phi}^n)=4\overline{\phi}^n-6\overline{\phi}^{n-1}+4\overline{\phi}^{n-2}-\overline{\phi}^{n-3},\\
& U_4^{n+1}=\frac{48r^n-36r^{n-1}+16r^{n-2}-3r^{n-3}}{25}+\int_\Omega \left(\mu^{n+1}\cdot\frac{25\phi^{n+1}-48\phi^{n}+36\phi^{n-1}-16\phi^{n-2}+3\phi^{n-3}}{25}\right) \ d\bm{x}\\
&-\left(\frac{25E[\phi^{n+1}]-48E[\phi^{n}]+36E[\phi^{n-1}]-16E[\phi^{n-2}]+3E[\phi^{n-3}]}{25}\right)+\frac{12}{25}\Delta t\Vert \nabla\mu^{n+1}\Vert_{L^2}^2.
\end{split}
\end{equation}

Following the proof of Theorem \ref{theorem2-2}, the energy stability of the RAV/BDF-$k$ scheme can be obtained directly.

\begin{theorem}
\label{theorem2-5}
For the RAV/BDF-$k$ scheme (\ref{the2-21}), it is unconditionally energy stable in the sense that the following discrete energy law holds:
\begin{equation}\label{the2-24}
\begin{split}
\alpha_k(E[\phi^{n+1}]+r^{n+1})-A_k(E[\phi^{n}]+r^n)\leq 0.
\end{split}
\end{equation}
\end{theorem}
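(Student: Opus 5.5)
We mirror the argument of Theorem \ref{theorem2-2}: derive an exact identity linking $U_k^{n+1}$ to the BDF-$k$ combination of the modified energies, then read off the sign from the definition (\ref{the2-21c}) of $r^{n+1}$. Throughout, $A_k$ acts on scalar sequences through the same coefficients as on $\phi$, so that $A_k(E[\phi^n]+r^n)=A_k(E[\phi^n])+A_k(r^n)$.

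First we take the $L^2$ inner product of (\ref{the2-21a}) with $\Delta t\,\mu^{n+1}$. Integrating by parts with periodic or homogeneous Neumann boundary conditions gives
\begin{equation}\nonumber
\left(\mu^{n+1},\alpha_k\phi^{n+1}-A_k(\phi^n)\right)=-\Delta t\Vert\nabla\mu^{n+1}\Vert_{L^2}^2 .
\end{equation}
Dividing by $\alpha_k$ shows that the term $\int_\Omega\mu^{n+1}\cdot\frac{\alpha_k\phi^{n+1}-A_k(\phi^n)}{\alpha_k}\,d\bm{x}$ in $U_k^{n+1}$ equals $-\frac{\Delta t}{\alpha_k}\Vert\nabla\mu^{n+1}\Vert_{L^2}^2$. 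For $k=3$ we have $\frac{11\phi^{n+1}-18\phi^n+9\phi^{n-1}-2\phi^{n-2}}{11}=\frac{6}{11}(\alpha_3\phi^{n+1}-A_3(\phi^n))$ and $\frac{1}{\alpha_3}=\frac{6}{11}$, and the case $k=4$ is identical with $\frac{12}{25}$. So this term cancels exactly with the last term $\frac{1}{\alpha_k}\Delta t\Vert\nabla\mu^{n+1}\Vert_{L^2}^2$ of $U_k^{n+1}$.

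The remaining terms of $U_k^{n+1}$ are $\frac{A_k(r^n)}{\alpha_k}-\frac{\alpha_kE[\phi^{n+1}]-A_k(E[\phi^n])}{\alpha_k}$. For instance, $\frac{18r^n-9r^{n-1}+2r^{n-2}}{11}=\frac{6}{11}A_3(r^n)$. This gives the exact identity
\begin{equation}\nonumber
\alpha_kE[\phi^{n+1}]-A_k(E[\phi^n]+r^n)+\alpha_kU_k^{n+1}=0,
\end{equation}
which is the analogue of (\ref{the2-8}).

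We now split into cases according to (\ref{the2-21c}):
\begin{itemize}
\item If $U_k^{n+1}\ge0$, then $r^{n+1}=0$, and the identity gives $\alpha_k(E[\phi^{n+1}]+r^{n+1})-A_k(E[\phi^n]+r^n)=-\alpha_kU_k^{n+1}\le0$.
\item If $U_k^{n+1}<0$, then $r^{n+1}=U_k^{n+1}$, and the same expression equals $0$.
\end{itemize}
In both cases (\ref{the2-24}) holds, with no restriction on $\Delta t$.

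The only step needing care is the coefficient bookkeeping: checking that the $\phi$-combination, the $r$-combination and the $E$-combination in (\ref{the2-22})--(\ref{the2-23}) are all exactly $\frac{1}{\alpha_k}$ times the BDF-$k$ stencil, and that the dissipation weight is $\frac{1}{\alpha_k}$. This is a finite arithmetic check, and no stability of the BDF stencil itself is needed.
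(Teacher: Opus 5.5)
Your proposal is correct and takes the same route as the paper, which simply repeats the argument of Theorem \ref{theorem2-2}: testing (\ref{the2-21a}) with $\Delta t\,\mu^{n+1}$ makes the $\mu^{n+1}$-term of $U_k^{n+1}$ cancel the dissipation term $\frac{1}{\alpha_k}\Delta t\Vert\nabla\mu^{n+1}\Vert_{L^2}^2$. This gives the analogue $\alpha_kE[\phi^{n+1}]-A_k(E[\phi^n]+r^n)+\alpha_kU_k^{n+1}=0$ of (\ref{the2-8}), and the case split on (\ref{the2-21c}) then yields (\ref{the2-24}). Your coefficient checks for $k=3,4$ are right, and you correctly note that (\ref{the2-21b}) need not be tested here, unlike for $Q^{n+1}$, because $U_k^{n+1}$ already contains $\mu^{n+1}$ directly.
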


\begin{remark}
Unlike the second-order scheme (\ref{the2-3}), the higher-order RAV/BDF-$k$ scheme (\ref{the2-21}) requires incorporating energy $E[\phi]$ into the auxiliary variable $U_k$ in order to achieve the energy stability (\ref{the2-24}). However, in the error analysis, such schemes appear unable to eliminate the time-step constraint. A further discussion of high-order schemes is beyond the scope of this paper and will be dealt with in future work.
\end{remark}

\subsection{RAV approach for gradient flows of multiple functions}
We consider the RAV approach for gradient flows of multiple functions $\phi_1, \phi_2, ..., \phi_k$, with the energy functional:
\begin{equation}\label{the2-25}
\begin{split}
E(\phi_1,\phi_2,...,\phi_k)=\sum\limits_{i=1}^{k}\int_\Omega \frac{1}{2}|\nabla\phi_i|^2 \ d\bm{x} +E_1(\phi_1,\phi_2,...,\phi_k),
\end{split}
\end{equation}
where $E_1=\int_\Omega F(\phi_1,\phi_2,...,\phi_k) \ d\bm{x}$.

We set $L_i=\frac{\delta E_1}{\delta \phi_i}$, $\tilde{E}[\phi_1,...,\phi_k]=E[\phi_1,...,\phi_k]+C_0>0$ and introduce the auxiliary variable $r(t)$ with $r|_{t=0}=0$. The gradient flow equation is given by:
\begin{equation}\label{the2-26}
\begin{split}
&\frac{\partial \phi_i}{\partial t}=\Delta \mu_i,\\
&\mu_i=-\epsilon^2\Delta \phi_i+L_i(\overline{\phi_i}), \\
&\frac{d}{dt}r(t)=\sum\limits_{i=1}^{k}\int_\Omega L_i(\overline{\phi_i})(\phi_i)_t \ d\bm{x}-\frac{d}{dt}E_1,\\
&\overline{\phi_i}=\xi_i\phi_i=\frac{\tilde{E}[\phi]+r}{\tilde{E}[\phi]}\phi_i.
\end{split}
\end{equation}
Based on the method of constructing the gradient flow scheme for a single function, we can easily propose an unconditionally energy stable second-order scheme as follows:
\begin{subequations}\label{the2-27}
	\begin{align}
&\frac{\phi_i^{n+1}-\phi_i^{n}}{\Delta t}=\Delta \mu_i^{n+\frac{1}{2}}, \label{the2-27a}\\
&\mu_i^{n+\frac{1}{2}}=-\epsilon^2\Delta \phi_i^{n+\frac{1}{2}}+\lambda \phi_i^{n+\frac{1}{2}}+L_i((\overline{\phi_i}^*)^{n+\frac{1}{2}})-\lambda(\overline{\phi_i}^*)^{n+\frac{1}{2}}, \label{the2-27b}\\
&r^{n+1} = 
\begin{cases}  
0, & V^{n+1} \geq 0, \\ 
V^{n+1}, & V^{n+1} < 0, 
\end{cases} \label{the2-27c}\\
&\overline{\phi_i}^{n+1}=\xi_i^{n+1}\phi_i^{n+1}:=\left(\frac{\tilde{E}[\phi_1^{n+1},..., \phi_k^{n+1}]+r^{n+1}}{\tilde{E}[\phi_1^{n+1},..., \phi_k^{n+1}]}\right)\phi_i^{n+1}, \label{the2-27d}
\end{align}
\end{subequations}
where 
\begin{equation}\nonumber
\begin{split}
&V^{n+1}=r^n+\sum\limits_{i=1}^{k}\int_\Omega \left(\left(L_i((\overline{\phi_i}^*)^{n+\frac{1}{2}})-\lambda(\overline{\phi_i}^*)^{n+\frac{1}{2}}\right)(\phi_i^{n+1}-\phi_i^n)\right) \ d\bm{x}\\
&-\left(E_1^{n+1}+\frac{\lambda}{2}\sum\limits_{i=1}^{k}\Vert \phi_i^{n+1}\Vert_{L^2}^2-\left(E_1^{n}+\frac{\lambda}{2}\sum\limits_{i=1}^{k}\Vert \phi_i^{n}\Vert_{L^2}^2\right)\right)+\Delta t\sum\limits_{i=1}^{k}\Vert \nabla\mu_i^{n+\frac{1}{2}}\Vert_{L^2}^2.
\end{split}
\end{equation}

Since the nonlinear terms are treated explicitly in the scheme, the variables can be solved sequentially in a simple manner, allowing for a highly efficient implementation. Furthermore, by arguments similar to those used in the proofs of Theorems \ref{theorem2-2} and \ref{theorem2-3}, the stability result for scheme (\ref{the2-21}) can be obtained directly.

\section{Error analysis}
\label{section3}
In this section, we shall derive optimal error estimates for the RAV scheme (\ref{the2-4}) without imposing any restriction on the time step, and the analysis can also be extended to $L^2$ or other types of gradient flows.

We first give some necessary regularity assumptions for the exact solution. 

\begin{assumption}
\label{assumption3-1}
We assume that the exact solution of the system (\ref{the2-1}) satisfies the following regularity condition:
\begin{equation}\label{the3-1}
\begin{split}
\phi \in L^\infty(0,T;W^{1,\infty})\cap W^{1,\infty}(0,T;L^2)\cap W^{2,2}(0,T;H^2)\cap W^{3,2}(0,T;L^2).
\end{split}
\end{equation}
\end{assumption}

We denote that
\begin{equation}\nonumber
\begin{split}
e^n=\phi^n-\phi(t^n), \quad \overline{e}^n=\overline{\phi}^n-\phi(t^n), \quad w^{n}=\mu^n-\mu(t^n).
\end{split}
\end{equation}

The truncation form of the system (\ref{the2-3}) is as follows:
\begin{subequations}\label{the3-2}
\begin{align}
&\frac{\phi(t^{n+1})-\phi(t^{n})}{\Delta t}=\Delta \mu(t^{n+\frac{1}{2}})+R_\phi^{n+\frac{1}{2}}, \label{the3-2a} \\
&\mu(t^{n+\frac{1}{2}})=-\epsilon^2\Delta \phi(t^{n+\frac{1}{2}})+\lambda\phi(t^{n+\frac{1}{2}})+F'(\overline{\phi}^*(t^{n+\frac{1}{2}}))-\lambda\overline{\phi}^*(t^{n+\frac{1}{2}})+R_\mu^{n+\frac{1}{2}}, \label{the3-2b}\\
&\overline{\phi}(t^{n+1})=\phi(t^{n+1}), \label{the3-2c}
\end{align}
\end{subequations}
where $\overline{\phi}^*(t^{n+\frac{1}{2}})=\frac{3}{2}\overline{\phi}(t^n)-\frac{1}{2}\overline{\phi}(t^{n-1})$. Moreover, the truncation errors are given by:
\begin{equation}\label{the3-3}
\begin{split}
R_\phi^{n+\frac{1}{2}}=\frac{1}{2\Delta t}\left(\int_{t^n}^{t^{n+\frac{1}{2}}}(t^n-s)^2\frac{\partial^3\phi}{\partial s^3} \ ds+\int_{t^{n+\frac{1}{2}}}^{t^{n+1}}(t^{n+1}-s)^2\frac{\partial^3\phi}{\partial s^3} \ ds\right),
\end{split}
\end{equation}
and
\begin{equation}\label{the3-4}
\begin{split}
R_\mu^{n+\frac{1}{2}}=F'(\phi(t^{n+\frac{1}{2}}))-F'(\overline{\phi}^*(t^{n+\frac{1}{2}}))-\lambda\left(\phi(t^{n+\frac{1}{2}})-\overline{\phi}^*(t^{n+\frac{1}{2}})\right).
\end{split}
\end{equation}

\begin{theorem}
\label{theorem3-2}
For the $H^{-1}$ gradient flow, we assume that $u^0\in H^3$ and that Assumption \ref{assumption3-1} holds. For the RAV scheme (\ref{the2-4}), there exists a positive constant $C_{T32}$ independent of $\Delta t$ such that
\begin{equation}\label{the3-5}
\begin{split}
\Vert e^{n+1}\Vert_{H^2}^2+\Vert \overline{e}^{n+1}\Vert_{H^2}^2\leq C_{T32}\left(\Delta t\right)^4.
\end{split}
\end{equation}
\end{theorem}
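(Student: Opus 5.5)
The plan is an energy-type estimate at the $H^2$ level for the error equations, closed by discrete Gronwall. The crucial input is Theorem \ref{theorem2-3}: it gives the uniform bound $\Vert \phi^n\Vert_{H^2}+\Vert \overline{\phi}^n\Vert_{H^2}\leq M_T$ with no condition on $\Delta t$. Since $H^2\hookrightarrow L^\infty$ for $d\leq 3$, $F'$, $F''$ and $F'''$ are then Lipschitz/bounded on the relevant range, uniformly in $n$. This is what removes any time-step restriction.

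\textbf{Step 1: the error equations.} Subtracting (\ref{the3-2}) from (\ref{the2-4}) gives
\[
\frac{e^{n+1}-e^n}{\Delta t}=\Delta w^{n+\frac12}-R_\phi^{n+\frac12},
\]
\[
w^{n+\frac12}=-\epsilon^2\Delta e^{n+\frac12}+\lambda e^{n+\frac12}+\big(F'((\overline{\phi}^*)^{n+\frac12})-F'(\overline{\phi}^*(t^{n+\frac12}))\big)-\lambda(\overline{e}^*)^{n+\frac12}-R_\mu^{n+\frac12}.
\]
Here $(\overline{e}^*)^{n+\frac12}=\frac32\overline{e}^n-\frac12\overline{e}^{n-1}$.

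\textbf{Step 2: the $H^2$ energy estimate.} Mimicking (\ref{the2-16}), take the inner product with $\Delta t\,\Delta^2(e^{n+1}+e^n)$. This produces
\[
\Vert\Delta e^{n+1}\Vert^2-\Vert\Delta e^n\Vert^2+\tfrac{\epsilon^2\Delta t}{2}\Vert\Delta^2(e^{n+1}+e^n)\Vert^2
\]
on the left, plus a positive $\lambda$-term. The right side contains:
\begin{itemize}
\item[(i)] $\Delta t\,(\Delta(F'(\cdot)-F'(\cdot)),\Delta^2(e^{n+1}+e^n))$. Using the uniform $H^2$ bounds, the regularity in Assumption \ref{assumption3-1} and interpolation, I bound $\Vert\Delta(F'(a)-F'(b))\Vert\leq C(\Vert a-b\Vert_{H^2}+\epsilon_0\Vert\Delta^2(a-b)\Vert)$-type quantities. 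Young's inequality then absorbs part of this into the dissipative term, leaving $C\Delta t(\Vert\overline{e}^n\Vert_{H^2}^2+\Vert\overline{e}^{n-1}\Vert_{H^2}^2)$ plus small multiples of $\Delta t\Vert\Delta^2\overline{e}\Vert^2$.
\item[(ii)] The truncation terms. These contribute $C\Delta t(\Vert\Delta R_\phi\Vert_{H^{-2}}^2+\Vert\Delta R_\mu\Vert^2)\leq C(\Delta t)^4\int\ldots$ via (\ref{the3-3})–(\ref{the3-4}) and Assumption \ref{assumption3-1}. The initialization assumption $u^0\in H^3$ controls the first step.
\end{itemize}
One concern is that the dissipation only controls $\Delta^2 e$, not $\Delta^2\overline{e}$. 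I handle this by writing $\overline{e}=e+(\xi-1)\phi$ and using the $H^2$ bound on $\phi^n$ together with the bound from (\ref{the2-17}).

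\textbf{Step 3: the main obstacle, relating $\overline{e}^{n}$ to $e^{n}$.} I need $\Vert\overline{e}^{n}\Vert_{H^2}\leq\Vert e^n\Vert_{H^2}+|1-\xi^n|\,M_T$, and then must show $|1-\xi^n|=|r^n|/\tilde E[\phi^n]\lesssim \Delta t^2+\Vert e^n\Vert_{H^1}$-type quantities. Since $\tilde E>C>0$, it suffices to bound $|r^{n}|$.

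By (\ref{the2-4c}), $r^{n+1}=\min(Q^{n+1},0)$. The exact solution satisfies a perturbed version of $Q=0$: the discrete energy identity holds exactly, up to $O(\Delta t^3)$ terms, with the dissipation $\Delta t\Vert\nabla\mu\Vert^2$ canceling. So I would write
\[
Q^{n+1}-r^n=\Delta t\Vert\nabla\mu^{n+\frac12}\Vert^2+\big[\text{energy increment}\big],
\]
and compare it with the same expression for the exact solution. This gives
\[
|r^{n+1}|\leq |Q^{n+1}|_-\leq C\Delta t\,(\Vert e^{n+1}\Vert_{H^2}+\Vert e^n\Vert_{H^2}+\Vert\overline{e}^{n}\Vert_{H^1}+\Vert\overline{e}^{n-1}\Vert_{H^1})+C\Delta t^3+|r^n|\,\chi.
\]
The term $r^n$ entering $Q^{n+1}$ might accumulate. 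The key observation is that a negative $Q$ resets only when it is negative, and $r^{n+1}\geq r^n+(\text{increment})$. So I would carry $|r^n|^2$ as an extra unknown in the Gronwall sum, summing the recursion as $|r^{n+1}|\leq |r^n|(1)+\ldots$. This is delicate; if it fails, I would instead bound $r^{n+1}$ directly via (\ref{the2-8}), as $r^{n+1}\geq E[\phi^{n}]+r^n-E[\phi^{n+1}]$, and telescope to $r^{n+1}\geq E[\phi^0]-E[\phi^{n+1}]$ corrected by the dissipation. Then compare with the exact energy identity, giving $|r^{n+1}|\leq C(\Delta t^2+\max_k\Vert e^k\Vert_{H^1})$.

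\textbf{Step 4: closing.} Combining Steps 2 and 3, summing over $n$, and applying the discrete Gronwall lemma (the constants depend only on $M_T$ and $T$) yields $\Vert\Delta e^{n+1}\Vert^2\leq C\Delta t^4$. Poincaré with the mass conservation of $e$ upgrades this to the full $H^2$ norm. Step 3 then transfers the bound to $\overline{e}^{n+1}$, giving (\ref{the3-5}).
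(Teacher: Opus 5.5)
\textbf{Verdict.} Your main line of argument is sound, though it is organized differently from the paper's; a few details need fixing, listed in the second paragraph.

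\textbf{Comparison with the paper.} The paper proceeds in a cascade. First, an $H^1$ energy estimate (testing with $w^{n+\frac12}$ and $e^{n+1}-e^n$) gives $\Vert e^{m+1}\Vert_{H^1}^2+\Delta t\sum\Vert\nabla w\Vert^2\le C(\Delta t)^4+C\Delta t\sum|r^i|^2$. Second, an error equation for $|r^{n+1}|$ is built from the pointwise mean value theorem. This yields only first order, because $R_\theta^n$ is $O((\Delta t)^2)$ per step, and must be redone with an expansion at $t^{n+\frac12}$ to reach second order. Only then does it perform the $H^2$ estimate with $\Delta^2 e^{n+\frac12}$. You skip the $H^1$ stage and the bootstrap. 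You write $Q^{n+1}-r^n=\Delta t\Vert\nabla\mu^{n+\frac12}\Vert^2+B^{n+1}$ and drop the nonnegative dissipation to get $|r^{n+1}|\le |r^n|+|B^{n+1}|$. You get $O((\Delta t)^3)$ consistency of $B^{n+1}$ directly from the midpoint structure. You control $\int_\Omega G'(\cdot)(e^{n+1}-e^n)$ by $\Delta t\Vert\Delta G'(\cdot)\Vert\,\Vert w^{n+\frac12}\Vert\le C\Delta t(\Vert e^{n+1}\Vert_{H^2}+\Vert e^n\Vert_{H^2}+\cdots)$ via the uniform $H^2$ bounds of Theorem \ref{theorem2-3}. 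Everything then closes in one Gronwall argument. What the paper's $H^1$ stage buys is control of $e^{n+1}-e^n$ through $\Delta t\sum\Vert\nabla w\Vert^2$, without $H^2$ errors in the $r$-equation. Your route is shorter. If you write $G(\phi^{n+1})-G(\phi^n)$ with the integral form of the mean value theorem, you also avoid the paper's $x$-dependent point $\theta^n$, which its bound on $K_1$ implicitly differentiates.

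\textbf{Details to fix.}
\begin{enumerate}
\item Drop the $\epsilon_0\Vert\Delta^2(a-b)\Vert$ term. For $a,b$ bounded in $H^2$ with $d\le 3$, one has $\Vert\Delta(G'(a)-G'(b))\Vert_{L^2}\le C\Vert a-b\Vert_{H^2}$, as in (\ref{the3-31}), so $\Delta^2\overline{e}$ never appears. This matters because your proposed remedy would fail: the Crank--Nicolson dissipation controls only $\Delta^2(e^{n+1}+e^n)$, and no bound on $\Delta^2\phi^n$ at individual time levels is available.
\item The dissipation does not ``cancel.'' For the exact solution, the analogue of $Q$ is about $\Delta t\Vert\nabla\mu\Vert^2>0$; only its sign is used.
\item The $\Vert e^{n+1}\Vert_{H^2}$ on the right of your $r$-bound is an implicit term. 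Substitute the explicit $H^2$ inequality for it, or note that the claim is trivial for $\Delta t\ge\Delta t_0$ by the uniform bounds. Otherwise Gronwall imposes a step-size restriction.
\item Your fallback has the inequality reversed, since $r^{n+1}=\min(Q^{n+1},0)\le Q^{n+1}$. Unrolled correctly using (\ref{the2-8}) and (\ref{the2-4c}), it gives $r^n=\min_{0\le j\le n}(E[\phi^j]-E[\phi^n])$. Hence $|r^n|=\max_{j\le n}(E[\phi^n]-E[\phi^j])\le 2\max_{k\le n}|E[\phi^k]-E[\phi(t^k)]|\le C\max_{k\le n}\Vert e^k\Vert_{H^1}$, because the exact energy is nonincreasing. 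This is the cleanest way to handle $r$, and it makes the paper's two-stage analysis of $r$ unnecessary.
\end{enumerate}
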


\begin{proof}
From (\ref{the2-4}) and (\ref{the3-2}), we can obtain the error equation for $\phi$ and $\mu$ as
\begin{subequations}\label{the3-6}
\begin{align}
&\frac{e^{n+1}-e^{n}}{\Delta t}=\Delta w^{n+\frac{1}{2}}-R_\phi ^{n+\frac{1}{2}}, \label{the3-6a}\\
&w^{n+\frac{1}{2}}=-\epsilon^2\Delta e^{n+\frac{1}{2}}+\lambda e^{n+\frac{1}{2}}+F'((\overline{\phi}^*)^{n+\frac{1}{2}})-F'(\overline{\phi}^*(t^{n+\frac{1}{2}}))-\lambda\left((\overline{\phi}^*)^{n+\frac{1}{2}}-\overline{\phi}^*(t^{n+\frac{1}{2}})\right) -R_\mu^{n+\frac{1}{2}}. \label{the3-6b}
\end{align}
\end{subequations}

By taking the inner product of (\ref{the3-6a}) and (\ref{the3-6b}) with $\Delta t w^{n+1}$ and $e^{n+1}-e^n$ respectively, we obtain
\begin{equation}\label{the3-7}
\begin{split}
&\frac{\lambda}{2}\left(\Vert e^{n+1}\Vert_{L^2}^2-\Vert e^{n}\Vert_{L^2}^2\right)+\frac{\epsilon^2}{2}\left(\Vert \nabla e^{n+1}\Vert_{L^2}^2-\Vert \nabla e^{n}\Vert_{L^2}^2\right)+\Delta t\Vert \nabla w^{n+\frac{1}{2}}\Vert_{L^2}^2\\
&=J_1+J_2+J_3+J_4,
\end{split}
\end{equation}
where
\begin{equation}\label{the3-8}
\begin{split}
&J_1=\left(F'(\overline{\phi}^*(t^{n+\frac{1}{2}}))-F'((\overline{\phi}^*)^{n+\frac{1}{2}}), e^{n+1}-e^n\right),\\
&J_2=\lambda\left((\overline{\phi}^*)^{n+\frac{1}{2}}-\overline{\phi}^*(t^{n+\frac{1}{2}}), e^{n+1}-e^n\right),\\
&J_3=(R_\mu^{n+\frac{1}{2}}, e^{n+1}-e^n),\\
&J_4=-\Delta t(R_\phi^{n+\frac{1}{2}}, w^{n+1}).
\end{split}
\end{equation}

By the Sobolev embedding theorem $H^2\hookrightarrow L^\infty$ and Theorem \ref{theorem2-3}, we have the following estimates:
\begin{equation}\label{the3-9}
\begin{split}
|J_1|&\leq \Delta t\left|\left(\nabla F'(\overline{\phi}^*(t^{n+\frac{1}{2}}))-\nabla F'((\overline{\phi}^*)^{n+\frac{1}{2}}) , \nabla w^{n+\frac{1}{2}}\right)\right|+\Delta t\left|\left( F'(\overline{\phi}^*(t^{n+\frac{1}{2}}))- F'((\overline{\phi}^*)^{n+\frac{1}{2}}) , R_\phi^{n+\frac{1}{2}} \right)\right|\\
&\leq C_\varepsilon\Delta t\Vert \nabla w^{n+1}\Vert_{L^2}^2+C\Delta t\Vert R_\phi^{n+\frac{1}{2}}\Vert_{L^2}^2+C\Delta t\left\Vert \left(F''(\overline{\phi}^*(t^{n+\frac{1}{2}}))- F''((\overline{\phi}^*)^{n+\frac{1}{2}})\right)\nabla\overline{\phi}^*(t^{n+\frac{1}{2}})\right\Vert_{L^2}^2\\
&+C\Delta t\left\Vert F''((\overline{\phi}^*)^{n+\frac{1}{2}})\left(\nabla\overline{\phi}^*(t^{n+\frac{1}{2}})-\nabla(\overline{\phi}^*)^{n+\frac{1}{2}}\right)\right\Vert_{L^2}^2+C\Delta t\Vert F'(\overline{\phi}^*(t^{n+\frac{1}{2}}))- F'((\overline{\phi}^*)^{n+\frac{1}{2}})\Vert_{L^2}^2\\
&\leq C\Delta t\left(\Vert \overline{e}^{n}\Vert_{L^2}^2+\Vert \nabla\overline{e}^{n}\Vert_{L^2}^2+\Vert \overline{e}^{n-1}\Vert_{L^2}^2+\Vert \nabla\overline{e}^{n-1}\Vert_{L^2}^2\right)\\
&+C_\varepsilon\Delta t\Vert \nabla w^{n+1}\Vert_{L^2}^2+C\left(\Delta t\right)^4\int_{t^n}^{t^{n+1}}\left\Vert \frac{\partial^3 \phi}{\partial s^3}\right\Vert_{L^2}^2 \ ds.
\end{split}
\end{equation}
Similarly
\begin{equation}\label{the3-10}
\begin{split}
|J_2|&\leq C\Delta t\left(\Vert \overline{e}^{n}\Vert_{L^2}^2+\Vert \overline{e}^{n-1}\Vert_{L^2}^2\right)+C_\varepsilon\Delta t\Vert \nabla w^{n+1}\Vert_{L^2}^2+C\left(\Delta t\right)^4\int_{t^n}^{t^{n+1}}\left\Vert \frac{\partial^3 \phi}{\partial s^3}\right\Vert_{L^2}^2 \ ds.
\end{split}
\end{equation}

Note that 
\begin{equation}\label{the3-11}
\begin{split}
\Vert \nabla R_\mu^{n+\frac{1}{2}}\Vert_{L^2}^2 &\leq C\Vert F''(\phi(t^{n+\frac{1}{2}}))\nabla\phi(t^{n+\frac{1}{2}})-F''(\overline{\phi}^*(t^{n+\frac{1}{2}}))\nabla\overline{\phi}^*(t^{n+\frac{1}{2}})\Vert_{L^2}^2+C\Vert \nabla \phi(t^{n+\frac{1}{2}})-\nabla\overline{\phi}^*(t^{n+\frac{1}{2}})\Vert_{L^2}^2\\
&\leq C(\Delta t)^3\int_{t^n}^{t^{n+1}}\left\Vert \frac{\partial^2 \phi}{\partial s^2}\right\Vert_{H^{1}}^2 \ ds.
\end{split}
\end{equation}
For $J_3$ and $J_4$, we estimate 
\begin{equation}\label{the3-12}
\begin{split}
|J_3|&\leq C_\varepsilon\Delta t\Vert \nabla w^{n+1}\Vert_{L^2}^2+C\left(\Delta t\right)^4\int_{t^n}^{t^{n+1}}\left\Vert \frac{\partial^3 \phi}{\partial s^3}\right\Vert_{L^2}^2 \ ds+C\Delta t\Vert \nabla R_\mu^{n+\frac{1}{2}}\Vert_{L^2}^2\\
&\leq C_\varepsilon\Delta t\Vert \nabla w^{n+1}\Vert_{L^2}^2+C\left(\Delta t\right)^4\int_{t^n}^{t^{n+1}}\left\Vert \frac{\partial^3 \phi}{\partial s^3}\right\Vert_{L^2}^2 \ ds+C(\Delta t)^4\int_{t^n}^{t^{n+1}}\left\Vert \frac{\partial^2 \phi}{\partial s^2}\right\Vert_{H^{1}}^2 \ ds,
\end{split}
\end{equation}
and 
\begin{equation}\label{the3-13}
\begin{split}
|J_4|&\leq C_\varepsilon\Delta t\Vert \nabla w^{n+1}\Vert_{L^2}^2+C\Delta t\Vert \nabla R_\mu^{n+\frac{1}{2}}\Vert_{L^2}^2\\
&\leq C_\varepsilon\Delta t\Vert \nabla w^{n+1}\Vert_{L^2}^2+C(\Delta t)^4\int_{t^n}^{t^{n+1}}\left\Vert \frac{\partial^2 \phi}{\partial s^2}\right\Vert_{H^{1}}^2 \ ds.
\end{split}
\end{equation}

By (\ref{the2-4d}), we notice that
\begin{equation}\label{the3-14}
\begin{split}
\overline{e}^{n}=(\xi^n-1)\phi^n+e^{n}=\frac{r^{n}}{\tilde{E}[\phi^{n}]}\phi^n+e^{n}.
\end{split}
\end{equation}
It follows from Theorem \ref{theorem2-3} that
\begin{equation}\label{the3-15}
\begin{split}
&\Vert\overline{e}^{n}\Vert_{L^2}^2\leq \Vert e^{n}\Vert_{L^2}^2+C|r^n|^2\Vert\phi^n\Vert_{L^2}^2\leq  \Vert e^{n}\Vert_{L^2}^2+C|r^n|^2,\\
&\Vert\nabla \overline{e}^{n}\Vert_{L^2}^2\leq \Vert \nabla e^{n}\Vert_{L^2}^2+C|r^n|^2\Vert \nabla \phi^n\Vert_{L^2}^2\leq \Vert \nabla e^{n}\Vert_{L^2}^2+C|r^n|^2.
\end{split}
\end{equation}

Combining (\ref{the3-7})-(\ref{the3-15}), we have
\begin{equation}\label{the3-16}
\begin{split}
&\left(\Vert e^{n+1}\Vert_{L^2}^2-\Vert e^{n}\Vert_{L^2}^2\right)+\left(\Vert \nabla e^{n+1}\Vert_{L^2}^2-\Vert \nabla e^{n}\Vert_{L^2}^2\right)+\Delta t\Vert \nabla w^{n+\frac{1}{2}}\Vert_{L^2}^2\\
&\leq C\Delta t\left(\Vert {e}^{n}\Vert_{L^2}^2+\Vert \nabla{e}^{n}\Vert_{L^2}^2+\Vert {e}^{n-1}\Vert_{L^2}^2+\Vert \nabla{e}^{n-1}\Vert_{L^2}^2\right)+C\Delta t(|r^n|^2+|r^{n-1}|^2)\\
&+C(\Delta t)^4\left(\int_{t^n}^{t^{n+1}}\left\Vert \frac{\partial^3 \phi}{\partial s^3}\right\Vert_{L^2}^2 \ ds+\int_{t^n}^{t^{n+1}}\left\Vert \frac{\partial^2 \phi}{\partial s^2}\right\Vert_{H^{1}}^2 \ ds\right).
\end{split}
\end{equation}

Summing over $n$ from $1$ to $m\leq \frac{T}{\Delta t}-1$ and using the discrete Gronwall inequality (see \cite{quarteroni1994numerical}), we obtain
\begin{equation}\label{the3-17}
\begin{split}
&\Vert e^{m+1}\Vert_{L^2}^2+\Vert \nabla e^{m+1}\Vert_{L^2}^2+\Delta t\sum\limits_{i=1}^{m}\Vert \nabla w^{i+\frac{1}{2}}\Vert_{L^2}^2\leq C(\Delta t)^4+C\Delta t\sum\limits_{i=1}^{m}\left(|r^i|^2+|r^{i-1}|^2\right).
\end{split}
\end{equation}

Since $r^{n+1}=0$ for $Q^{n+1}\geq 0$, we only need to consider the case $r^{n+1}=Q^{n+1}< 0$. By the mean value theorem, (\ref{the2-4c}) can be rewritten as
\begin{equation}\label{the3-18}
\begin{split}
&r^{n+1}-r^{n}-\Delta t\Vert \nabla\mu^{n+\frac{1}{2}}\Vert_{L^2}^2=\int_\Omega \left(\left(F'((\overline{\phi}^*)^{n+\frac{1}{2}})-\lambda(\overline{\phi}^*)^{n+\frac{1}{2}}\right)(\phi^{n+1}-\phi^n)\right) \ d\bm{x}\\
&-\int_\Omega \left(F'(\phi^n+\theta^n(\phi^{n+1}-\phi^{n}))-\lambda(\phi^n+\theta^n(\phi^{n+1}-\phi^{n}))\right) (\phi^{n+1}-\phi^{n}) \ d\bm{x},
\end{split}
\end{equation}
where $\theta^n\in (0, 1)$. Let $G(\phi)=F(\phi)-\frac{\lambda}{2}\phi^2$. The corresponding error equation is given by:
\begin{equation}\label{the3-19}
\begin{split}
&|r^{n+1}|-|r^{n}|+\Delta t\Vert \nabla\mu^{n+\frac{1}{2}}\Vert_{L^2}^2=\int_\Omega G'\left(\phi^n+\theta^n(\phi^{n+1}-\phi^{n})\right)\left(e^{n+1}-e^n\right) \ d\bm{x}\\
&+\int_\Omega  \left(G'\left(\phi^n+\theta^n(\phi^{n+1}-\phi^{n})\right)-G'\left(\phi(t^n)+\theta^n(\phi(t^{n+1})-\phi(t^{n}))\right)\right)(\phi(t^{n+1})-\phi(t^{n})) \ d\bm{x}\\
&-\int_\Omega \left(G'((\overline{\phi}^*)^{n+\frac{1}{2}})-G'(\overline{\phi}^*(t^{n+\frac{1}{2}}))\right)(\phi(t^{n+1})-\phi(t^{n})) \ d\bm{x}\\
&-\int_\Omega G'((\overline{\phi}^*)^{n+\frac{1}{2}})(e^{n+1}-e^n)\ d\bm{x}-R_\theta^{n}\\
&=:K_1+K_2+K_3+K_4+R_\theta^{n},
\end{split}
\end{equation}
where $R_\theta^n=\int_\Omega \left(G'\left(\phi(t^n)+\theta^n(\phi(t^{n+1})-\phi(t^{n}))\right)-G'(\overline{\phi}^*(t^{n+\frac{1}{2}}))\right)(\phi(t^{n+1})-\phi(t^{n})) \ d\bm{x}$.

By using Theorem \ref{theorem2-3}, we have
\begin{equation}\label{the3-20}
\begin{split}
&|K_1|+|K_4|\leq C\Delta t\left(\Vert \nabla w^{n+1}\Vert_{L^2}+\Vert R_\phi^{n+\frac{1}{2}}\Vert_{L^2}\right),\\
&|K_2|+|K_3|\leq C\Delta t\left(\Vert e^{n+1}\Vert_{L^2}+\Vert e^{n}\Vert_{L^2}+\Vert e^{n-1}\Vert_{L^2}+|r^n|+|r^{n-1}|\right),\\
&|R_\theta^n|\leq C\left(\Delta t\right)^2.
 \end{split}
\end{equation}

From (\ref{the3-17}), (\ref{the3-19}) and (\ref{the3-20}), and by using the Cauchy-Schwarz inequality, we obtain 
\begin{equation}\label{the3-21}
\begin{split}
|r^{n+1}|^2\leq C(\Delta t)^2+C\Delta t\sum\limits_{i=1}^{n}\left(|r^i|^2+|r^{i-1}|^2\right).
\end{split}
\end{equation}

By applying the discrete Gronwall inequality, it is easy to obtain the first-order estimate
\begin{equation}\label{the3-22}
\begin{split}
\Vert e^{n+1}\Vert_{H^1}^2+|r^{n+1}|^2\leq C(\Delta t)^2.
\end{split}
\end{equation}

We next derive the second-order estimate. Consider the error equation for $r$ at $t^{n+\frac{1}{2}}$:
\begin{equation}\label{the3-23}
\begin{split}
&|r^{n+1}|-|r^{n}|+\Delta t\Vert \nabla\mu^{n+\frac{1}{2}}\Vert_{L^2}^2=\int_\Omega \left(G\left(\phi(t^{n+1})+\zeta^{n+1}e^{n+1}\right)-G\left(\phi(t^{n})+\zeta^n e^{n}\right)\right)e^{n+1} \ d\bm{x}\\
&\int_\Omega  G\left(\phi(t^{n})+\zeta^n e^{n}\right)(e^{n+1}-e^n) \ d\bm{x}+K_3+K_4+R_r^{n+\frac{1}{2}},
\end{split}
\end{equation}
where $\zeta^n$ is between $\phi(t^n)$ and $\phi^n$. The truncation error
\begin{equation}\label{the3-24}
\begin{split}
&R_r^{n+\frac{1}{2}}=\Delta t \int_\Omega \left(G'(\phi({t^{n+\frac{1}{2}}})-G'(\overline{\phi}^*(t^{n+\frac{1}{2}}))\right)\phi_t(t^{n+\frac{1}{2}}) \ d\bm{x}\\
&+\int_\Omega G'(\overline{\phi}^*(t^{n+\frac{1}{2}}))\left(\Delta t\phi_t(t^{n+\frac{1}{2}})-(\phi(t^{n+1})-\phi(t^{n}))\right) \ d\bm{x}\\
&+\frac{1}{2}\int_\Omega \left(\int_{t^n}^{t^{n+\frac{1}{2}}}(t^n-s)^2\left(G''(\frac{\partial \phi}{\partial s})^3+2G'\frac{\partial^2 \phi}{\partial s^2}+G'\frac{\partial \phi}{\partial s}\frac{\partial^2 \phi}{\partial s^2}+G\frac{\partial^3 \phi}{\partial s^3}\right) \ ds\right)           \ d\bm{x} \\
&+\frac{1}{2}\int_\Omega \left(\int_{t^{n+\frac{1}{2}}}^{t^{n+1}}(t^{n+1}-s)^2 \left(G''(\frac{\partial \phi}{\partial s})^3+2G'\frac{\partial^2 \phi}{\partial s^2}+G'\frac{\partial \phi}{\partial s}\frac{\partial^2 \phi}{\partial s^2}+G\frac{\partial^3 \phi}{\partial s^3}\right) \ ds\right)           \ d\bm{x}.
\end{split}
\end{equation}
It follows that
\begin{equation}\label{the3-25}
\begin{split}
&|R_r^{n+\frac{1}{2}}|\leq C\Delta t\left|\int_{t^n}^{t^{n+1}}(t^n-s)\left\Vert\frac{\partial^2 \phi}{\partial s^2}\right\Vert_{L^2} \ ds\right|+C\int_{t^n}^{t^{n+1}}(t^n-s)^2\left(1+\left\Vert\frac{\partial^3\phi}{\partial s^3}\right\Vert_{L^2}+\left\Vert\frac{\partial^2\phi}{\partial s^2}\right\Vert_{L^2}\right) \ ds.
\end{split}
\end{equation}

By using (\ref{the3-22}), the first and second terms on the right-hand side of (\ref{the3-23}) can be estimated by:
\begin{equation}\label{the3-26}
\begin{split}
&\left|\int_\Omega \left(G\left(\phi(t^{n+1})+\zeta^{n+1}e^{n+1}\right)-G\left(\phi(t^{n})+\zeta^n e^{n}\right)\right)e^{n+1} \ d\bm{x}\right|\leq \Delta t\Vert e^{n+1}\Vert_{L^2}+\Vert e^{n+1}\Vert_{L^2}^2+\Vert e^{n}\Vert_{L^2}^2\\
&\leq C\Delta t\left(\Vert e^{n+1}\Vert_{L^2}+\Vert e^{n}\Vert_{L^2}\right),\\
&\left|\int_\Omega  G\left(\phi(t^{n})+\zeta^n e^{n}\right)(e^{n+1}-e^n) \ d\bm{x}\right|\leq C\Delta t\left(\Vert \nabla w^{n+1}\Vert_{L^2}+\Vert R_\phi^{n+\frac{1}{2}}\Vert_{L^2}\right).
\end{split}
\end{equation}

Combining (\ref{the3-23})-(\ref{the3-26}), we arrive at
\begin{equation}\label{the3-27}
\begin{split}
|r^{n+1}|^2&\leq C(\Delta t)^4+C\Delta t\sum\limits_{i=1}^{n}\left(|r^i|^2+|r^{i-1}|^2\right).
\end{split}
\end{equation}
By using the Gronwall inequality again, we obtain 
\begin{equation}\label{the3-28}
\begin{split}
\Vert e^{n+1}\Vert_{H^1}^2+\Vert \overline{e}^{n+1}\Vert_{H^1}^2+|r^{n+1}|^2\leq C(\Delta t)^4.
\end{split}
\end{equation}

Finally, we derive the estimate for $\Vert e^{n+1}\Vert_{H^2}$. Combining (\ref{the3-6a}) with (\ref{the3-6b}) and taking the inner product with $\Delta^2 e^{n+\frac{1}{2}}$ gives
\begin{equation}\label{the3-29}
\begin{split}
&\frac{1}{2}\left(\Vert \Delta e^{n+1}\Vert_{L^2}^2-\Vert \Delta e^{n}\Vert_{L^2}^2\right)+\lambda\Delta t\Vert \nabla\Delta e^{n+\frac{1}{2}}\Vert_{L^2}^2+\epsilon\Delta t\Vert \Delta^2 e^{n+\frac{1}{2}}\Vert_{L^2}^2\\
&=\Delta t\left(\Delta G'((\overline{\phi}^*)^{n+\frac{1}{2}})-\Delta G'(\overline{\phi}^*(t^{n+\frac{1}{2}})), \Delta^2 e^{n+\frac{1}{2}}\right)-\Delta t\left(R_\phi^{n+\frac{1}{2}}+\Delta R_\mu^{n+\frac{1}{2}}, \Delta^2 e^{n+\frac{1}{2}}\right).
\end{split}
\end{equation}
Note that 
\begin{equation}\label{the3-30}
\begin{split}
\Delta G'(\phi)=G'(\phi)\Delta \phi+G''(\phi)|\nabla\phi|^2,
\end{split}
\end{equation}
and 
\begin{equation}\label{the3-31}
\begin{split}
&\Vert\Delta G'((\overline{\phi}^*)^{n+\frac{1}{2}})-\Delta G'(\overline{\phi}^*(t^{n+\frac{1}{2}}))\Vert_{L^2}\\
&\leq C\left(\Vert G'((\overline{\phi}^*)^{n+\frac{1}{2}})\Delta\left(\overline{e}^n-\overline{e}^{n-1}\right)\Vert_{L^2}+\Vert (G'((\overline{\phi}^*)^{n+\frac{1}{2}})-G'(\overline{\phi}^*(t^{n+\frac{1}{2}}))) \Delta \overline{\phi}^*(t^{n+\frac{1}{2}})\Vert_{L^2}\right) \\
&+C\left(\left\Vert  G'((\overline{\phi}^*)^{n+\frac{1}{2}})\left(|\nabla (\overline{\phi}^*)^{n+\frac{1}{2}}|^2-|\nabla \overline{\phi}^*(t^{n+\frac{1}{2}})|^2\right)\right\Vert_{L^2}+\Vert (G''((\overline{\phi}^*)^{n+\frac{1}{2}})-G''(\overline{\phi}^*(t^{n+\frac{1}{2}})))\Vert_{L^2}\right)\\
&\leq C\left(\Vert \overline{e}^{n}\Vert_{L^2}+\Vert \overline{e}^{n-1}\Vert_{L^2}+\Vert \nabla \overline{e}^{n}\Vert_{L^2}+\Vert \nabla \overline{e}^{n-1}\Vert_{L^2}+\Vert \Delta \overline{e}^{n}\Vert_{L^2}+\Vert \Delta \overline{e}^{n-1}\Vert_{L^2}\right),
\end{split}
\end{equation}
where we used the Sobolev embedding $H^2\hookrightarrow W^{1,4}$.

Therefore, the term on the right-hand side of (\ref{the3-29}) can be estimated as
\begin{equation}\label{the3-32}
\begin{split}
&\left|\Delta t\left(\Delta G'((\overline{\phi}^*)^{n+\frac{1}{2}})-\Delta G'(\overline{\phi}^*(t^{n+\frac{1}{2}})), \Delta^2 e^{n+\frac{1}{2}}\right)-\Delta t\left(R_\phi^{n+\frac{1}{2}}+\Delta R_\mu^{n+\frac{1}{2}}, \Delta^2 e^{n+\frac{1}{2}}\right)\right|\\
&\leq C(\Delta t)^5+C_{\varepsilon}\Delta t\Vert \Delta^2 e^{n+\frac{1}{2}}\Vert_{L^2}^2+C\Delta t\left(\Vert \Delta {e}^{n}\Vert_{L^2}^2+\Vert \Delta {e}^{n-1}\Vert_{L^2}^2+|r^n|^2+|r^{n-1}|^2\right)\\
&+\Delta t\Vert R_\phi^{n+\frac{1}{2}}\Vert_{L^2}^2+\Delta t\Vert \Delta R_\mu^{n+\frac{1}{2}}\Vert_{L^2}^2.
\end{split}
\end{equation}
Then combining (\ref{the3-29}) with (\ref{the3-28}) and (\ref{the3-32}) leads to
\begin{equation}\label{the3-33}
\begin{split}
\Vert \Delta e^{n+1}\Vert_{L^2}\leq C(\Delta t)^4(1+\Vert \phi\Vert_{W^{3,2}(0,T;L^2)}^2+\Vert \phi\Vert_{W^{2,2}(0,T;H^2)}^2)+C\Delta t\sum\limits_{i=1}^{n}\left(\Vert \Delta {e}^{n}\Vert_{L^2}^2+\Vert \Delta {e}^{n-1}\Vert_{L^2}^2\right).
\end{split}
\end{equation}

By applying the discrete Gronwall inequality, we can obtain the desired result (\ref{the3-5}).
\end{proof}

\section{Numerical experiments}
\label{section4}
In this section, we first test the proposed RAV scheme on the classical Cahn–Hilliard and phase-field crystal models to demonstrate its accuracy, stability, and efficiency. Moreover, we compare the obtained results with those of the original SAV method. Subsequently, we apply the scheme to the challenging phase-field vesicle model and the surfactant model. In all examples, we consider periodic or homogeneous Neumann boundary conditions, and the finite element method is used for spatial discretization.

In Subsections \ref{subsection4-1} and \ref{subsection4-2}, we compare the second-order RAV scheme (\ref{the2-3}) with the following SAV-CN scheme:
\begin{equation}\label{the4-1}
\begin{split}
&\frac{\phi^{n+1}-\phi^n}{\Delta t}=\mathcal{G}\mu^{n+\frac{1}{2}}, \\
&\mu^{n+\frac{1}{2}}=\mathcal{L}{\phi^{n+\frac{1}{2}}}+\frac{r^{n+1}+r^n}{2\sqrt{\int_\Omega F((\overline{\phi}^*)^{n+\frac{1}{2}}) \ d\bm{x}+C_0}}F'((\overline{\phi}^*)^{n+\frac{1}{2}}),\\
&\frac{r^{n+1}-r^n}{\Delta t}=\frac{1}{2\sqrt{\int_\Omega F((\overline{\phi}^*)^{n+\frac{1}{2}}) d\bm{x}+C_0}}\int_\Omega F'((\overline{\phi}^*)^{n+\frac{1}{2}}) \frac{\phi^{n+1}-\phi^{n}}{\Delta t}\ d\bm{x}.
\end{split}
\end{equation}
For both the SAV and RAV schemes, the stabilization parameter is set to $\lambda=2$, unless specified otherwise.

\subsection{Cahn-Hilliard model}
\label{subsection4-1}
The Cahn-Hilliard equation \cite{cahn1958free} is a fundamental diffuse-interface model that captures phase separation dynamics. The corresponding free energy density is $F(\phi)=\frac{1}{4}(\phi^2-1)^2$, and the computational domain is set to $\Omega=[0,2\pi]^2$. The periodic boundary conditions and the initial condition $\phi_0(x,y)=0.05\sin(x)\sin(y)$ are imposed.

We first test the temporal convergence of the second‑order RAV scheme(\ref{the2-3}) on a sufficiently fine spatial mesh. A finer time step $\Delta t^r=1e-5$ is used to compute a reference solution up to $T=0.016$. The computational error, convergence rates and auxiliary variable $r$ are shown in Table \ref{table1}. It can be directly seen that the proposed RAV scheme achieves the desired temporal accuracy, and the auxiliary variable $r$ remains fully consistent with the original variable $0$ for all tested small time steps.

\begin{table}[H]
\caption{Errors and convergence rates for $\phi$ of the second-order RAV scheme for the Cahn–Hilliard equation at $T=0.016$.}\label{table1}
\centering
\scalebox{0.65}{
\resizebox{\linewidth}{!}{
\begin{tabular}{c c c c c c} \hline  		
 $\Delta t$   &  $L^2$-error  &  Order    &  $L^\infty$-error  &  Order  & $\max_{i\leq n} r^i$  \\ \hline
 $1.6e-3$		&  1.057e-5  &  &  4.417e-5 &  &  0.0 \\ [6pt]
 $8e-4$	&  2.726e-6 &  1.96 &  1.134e-5    & 1.96 &  0.0 \\  [6pt]
$4e-4$	&  6.867e-7  &   1.99  &  2.638e-6 &  2.10 &  0.0 \\  [6pt]
$2e-4$	&  1.648e-7   &  2.06 &   5.799e-7   &  2.18 &  0.0 \\  [6pt]
$1e-4$	&  3.996e-8   &  2.04 &   1.394e-7    &  2.05 & 0.0 \\  \hline 
\end{tabular}}
	}
\end{table}

We set the final time to $T=5$ and use relatively large time steps to further test the robustness and accuracy of the SAV and RAV methods under coarse temporal resolution. As shown in Figures \ref{figure1} and \ref{figure2}, the snapshots in panels (a)–(c) clearly demonstrate that the proposed RAV scheme exhibits significantly reduced numerical dissipation, resulting in a more stable and physically reliable solution compared with the SAV method. Furthermore, panel (d) shows the error between the auxiliary variable and the original variable. For the SAV method, the error increases over time, leading to a noticeable loss of accuracy in the numerical solution. In comparison, the RAV method preserves a consistently small discrepancy, demonstrating its superior accuracy.

\begin{figure}[H]
	\centering
	\subfigure[$\Delta t=1/2$]{
		\includegraphics[scale=0.16]{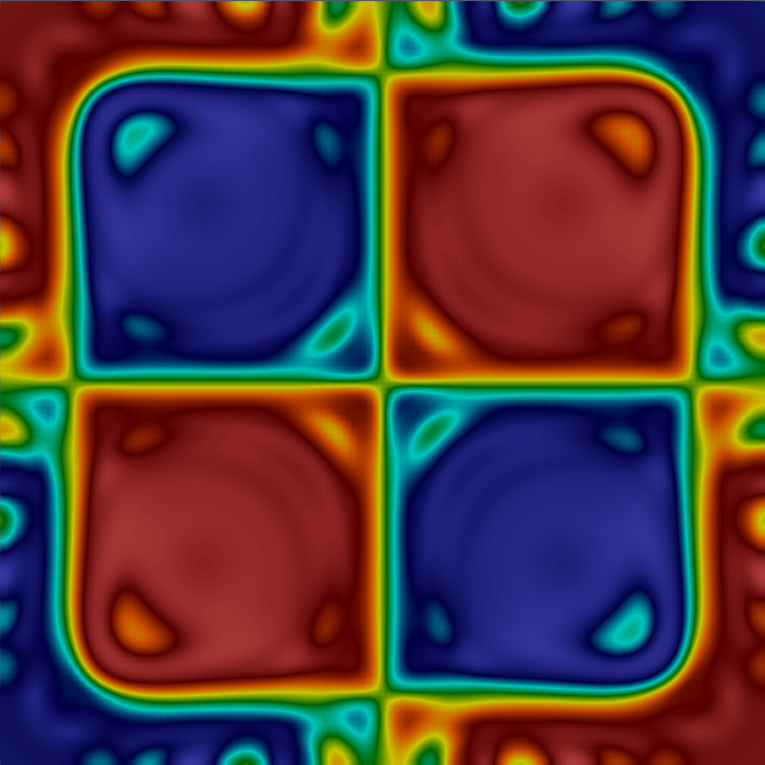}
	}
	\subfigure[$\Delta t=1/4$]{
		\includegraphics[scale=0.16]{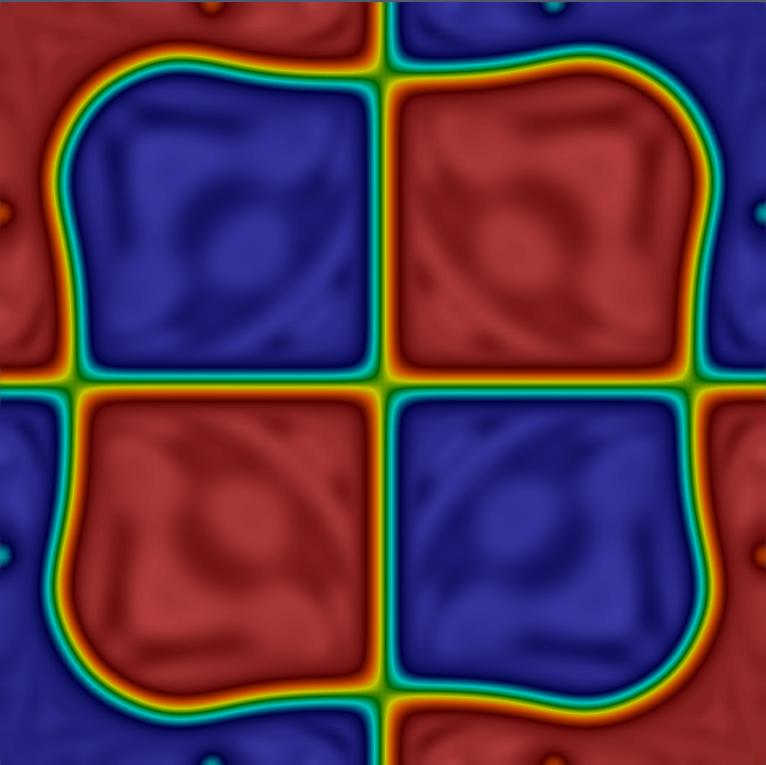}
	}
	\subfigure[$\Delta t=1/8$]{
		\includegraphics[scale=0.16]{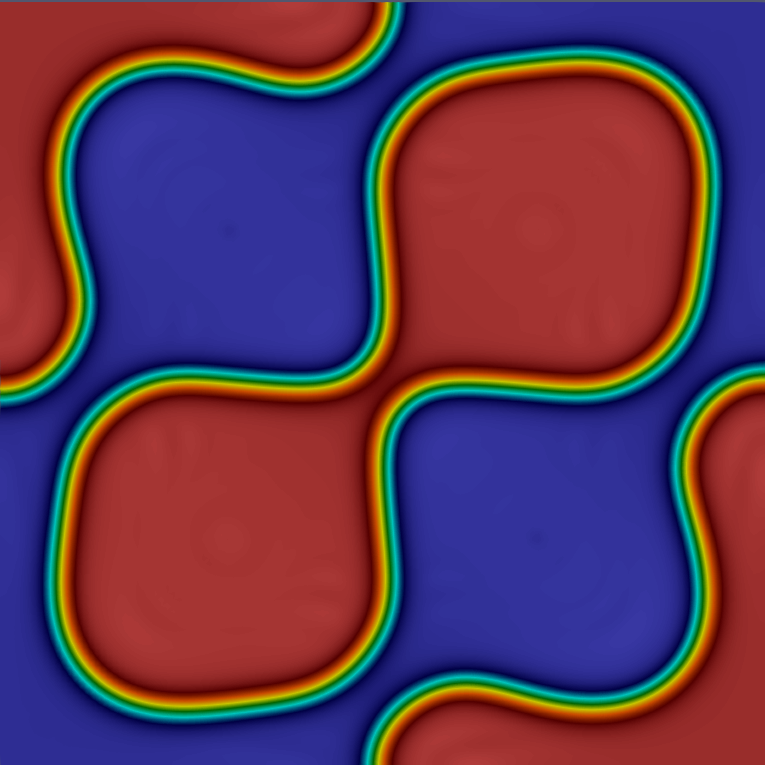}
	}
	\subfigure[$|r_{\text{sav}}-\sqrt{\int_\Omega F(\phi)d\bm{x}+C_0}|$]{
		\includegraphics[scale=0.16]{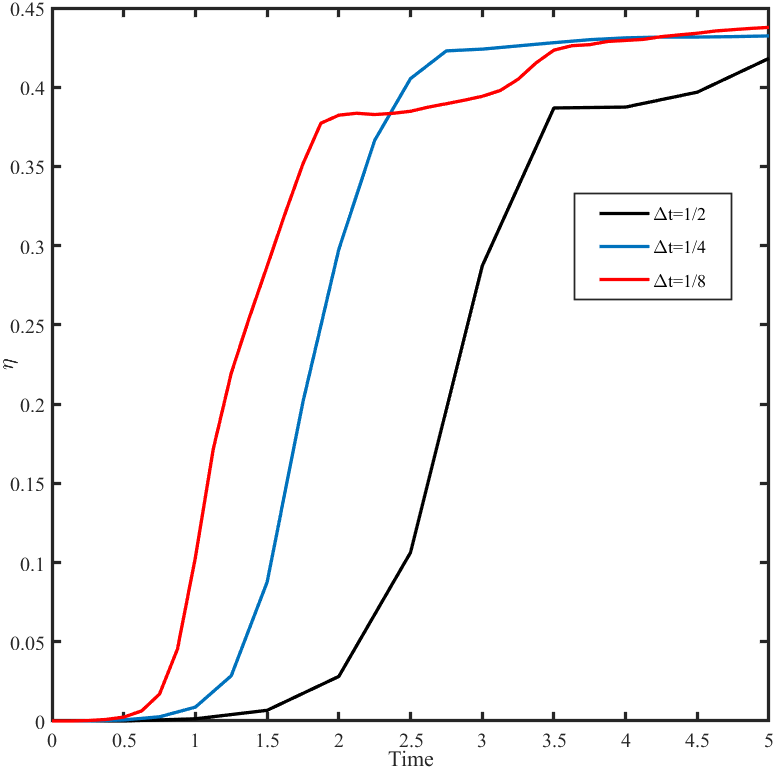}
	}
	
	\caption{Snapshots of the phase variable $\phi$ computed by the SAV–CN scheme at $t=5$. The line graphs give the discrepancy between the auxiliary variable and the original variable.}
	\label{figure1}
\end{figure}

\begin{figure}[H]
	\centering
	\subfigure[$\Delta t=1/2$]{
		\includegraphics[scale=0.16]{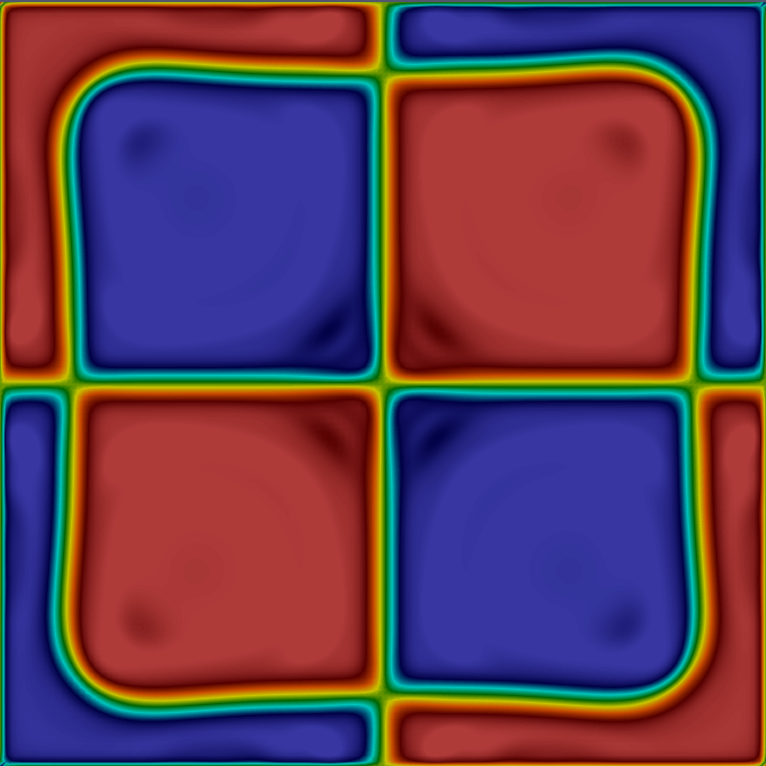}
	}
	\subfigure[$\Delta t=1/4$]{
		\includegraphics[scale=0.16]{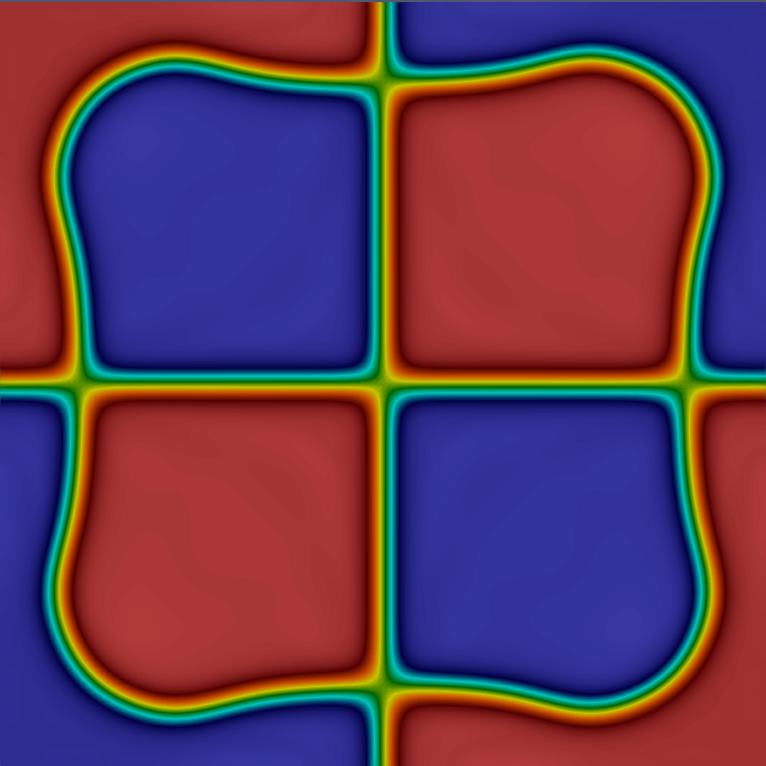}
	}
	\subfigure[$\Delta t=1/8$]{
		\includegraphics[scale=0.16]{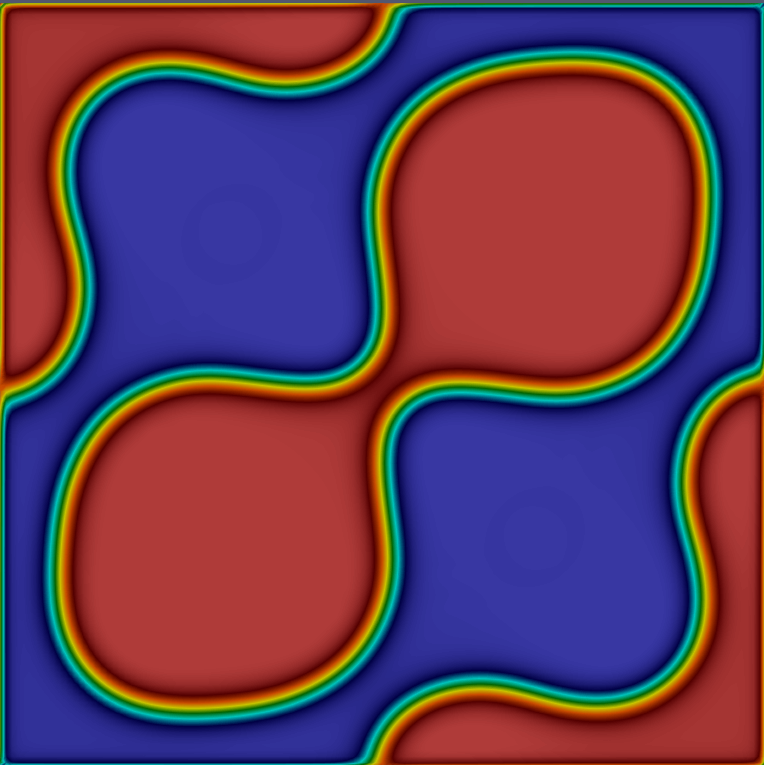}
	}
	\subfigure[$r_{\text{rav}}-0$]{
		\includegraphics[scale=0.16]{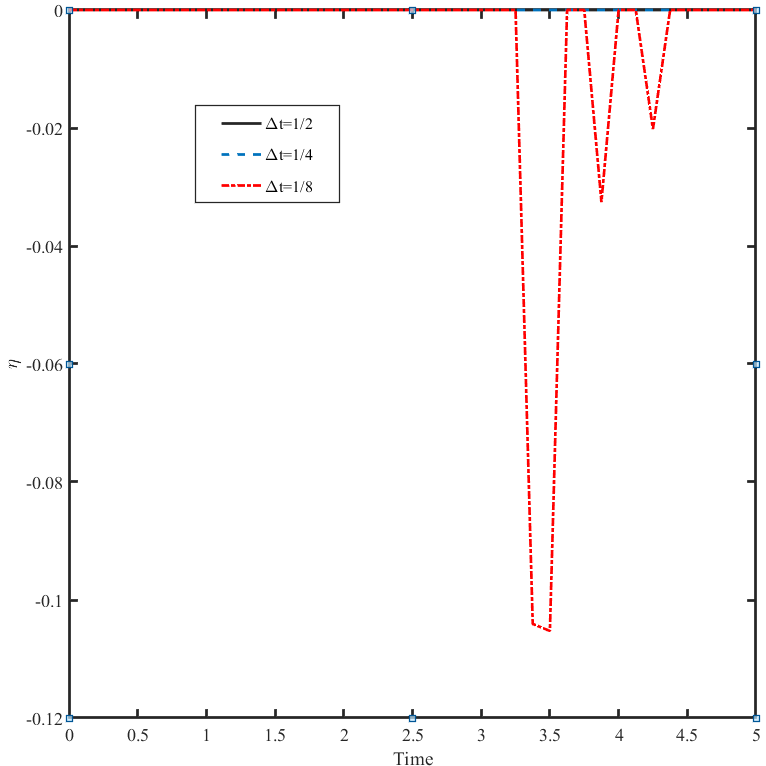}
	}
	
	\caption{Snapshots of the phase variable $\phi$ computed by the RAV scheme (\ref{the2-3}) at $t=5$. The line graphs give the discrepancy between the auxiliary variable and the original variable.}
	\label{figure2}
\end{figure}

Figure \ref{figure3} shows the time evolution of $Q^{n+1}-r^n$ and the total energy. For the cases $\Delta t=1/2$ and $\Delta t=1/4$, it can be observed that $Q^{n+1}-r^n\geq 0$, which corresponds to a decrease in both the original energy and the modified energy. It is worth noting that for $\Delta t=1/8$, there are certain moments at which $Q^{n+1}-r^n< 0$, and at these moments the scheme fails to satisfy the original energy law, whereas the modified energy law still does. These numerical results verify the correctness of Theorem \ref{theorem2-2} regarding its characterization of the relationship between the original and modified energies.

\begin{figure}[H]
	\centering
	\subfigure[$(Q^{n+1}-r^n)$ vs. time]{
		\includegraphics[scale=0.38]{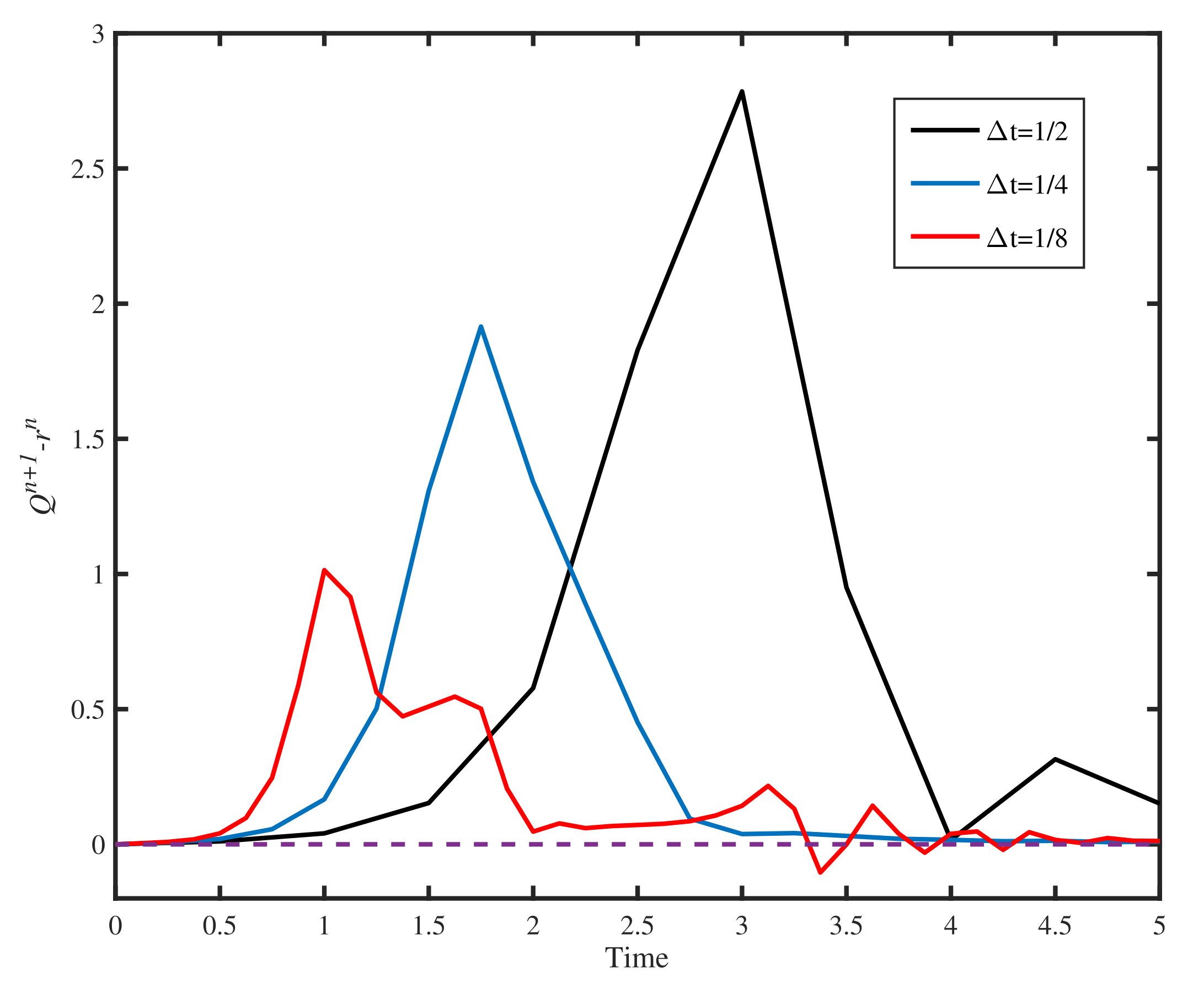}
	}
	\subfigure[Energy vs. time]{
		\includegraphics[scale=0.38]{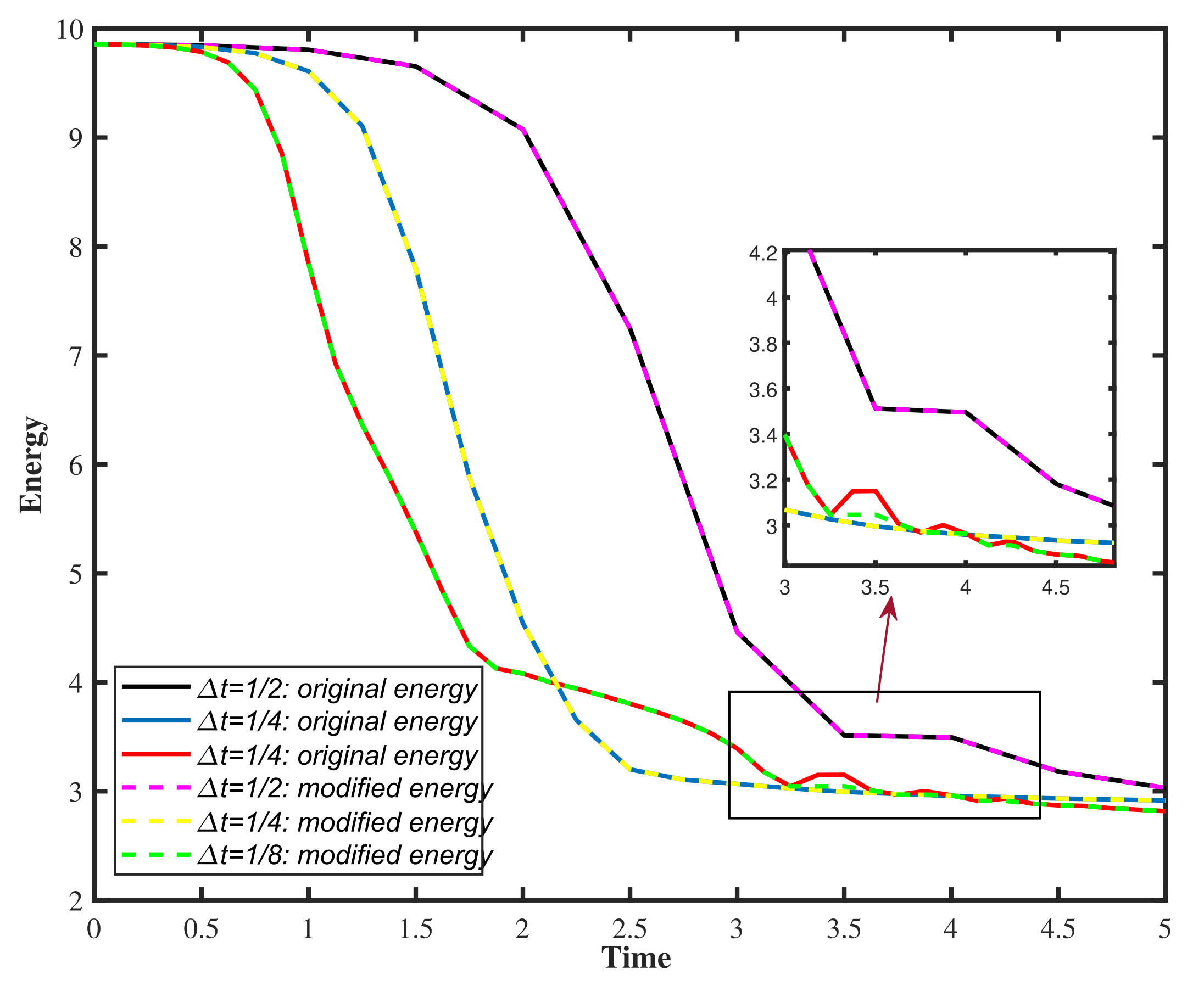}
	}
	\caption{Evolution of total energy for the RAV scheme with different time steps.}
	\label{figure3}
\end{figure}

\subsection{Phase-field crystal (PFC) model}
\label{subsection4-2}
The phase-field crystal model \cite{elder2002modeling} is capable of resolving diffusive time scales while retaining structural information at the atomic level, and has therefore become a powerful tool for investigating microstructure evolution in a wide range of materials systems.

Herein we consider the Swift-Hohenberg free energy \cite{swift1977hydrodynamic}:
\begin{equation}\label{the4-2}
\begin{split}
E[\phi]=\int_{\Omega}\left(\frac{1}{2}\phi(1+\Delta)^2\phi+\frac{1}{4}\phi^4-\frac{1}{3}\phi^3-\frac{\epsilon}{2}\phi^2\right)  \ d\bm{x}.
\end{split}
\end{equation}
The PFC model takes the form
\begin{equation}\label{the4-3}
\begin{split}
&\frac{\partial \phi}{\partial t}=\Delta \mu,\\
&\mu=\frac{\delta E}{\delta \phi}=(1+\Delta)^2\phi+\phi^3-\phi^2-\epsilon\phi.
\end{split}
\end{equation}
The computational domain is set as $\Omega=[0,8]^2$ with periodic boundary conditions. The initial condition is given by $\phi_0(x,y)=\sin(\frac{\pi x}{4})\sin(\frac{\pi y}{4})$, and the parameter is set to $\epsilon=0.02$ to test the accuracy of the RAV scheme. Following the same procedure as in Example 1, Table \ref{table2} shows that the proposed RAV scheme achieves second‑order accuracy in time. These numerical results are consistent with the error estimates in Theorem \ref{theorem3-2}.

\begin{table}[H]
\caption{Errors and convergence rates for $\phi$ of the second-order RAV scheme for the PFC equation at $T=0.016$.}\label{table2}
\centering
\scalebox{0.65}{
\resizebox{\linewidth}{!}{
\begin{tabular}{c c c c c c} \hline  		
 $\Delta t$   &  $L^2$-error  &  Order    &  $L^\infty$-error  &  Order  & $\max_{i\leq n} r^i$  \\ \hline
 $1.6e-3$		& 2.031e-04    &  & 8.707e-04  &  &  0.0 \\ [6pt]
 $8e-4$	& 5.573e-05  &  1.87 &  2.563e-04    & 1.76 &  0.0 \\  [6pt]
$4e-4$	& 1.502e-05    &   1.89  & 7.391e-05  &  1.79 &  0.0 \\  [6pt]
$2e-4$	& 3.938e-06     &  1.94 &   2.036e-05    &  1.86 &  0.0 \\  [6pt]
$1e-4$	& 9.973e-07      &  1.98 &   5.493e-06    &  1.89 & 0.0 \\  \hline 
\end{tabular}}
	}
\end{table}

We next investigate the performance of the RAV and SAV schemes for the PFC model with large time steps. The parameters are chosen as $\epsilon=0.02$, $\lambda=0.2$, $T=40$, and the computational domain is taken as $\Omega=[0,128]^2$, with the initial condition given by $\phi_0(x,y)=0.1+0.1\text{Rand}(x,y)$. $\text{Rand}(x,y)$ is the random number in $[-1,1]$ with zero mean. Figure \ref{figure4} and \ref{figure5} present the density fields computed using the SAV and RAV methods with different time steps, as well as the differences between the auxiliary and original variables. It is evident that the RAV scheme maintains strict consistency between the original and auxiliary variables, while the SAV scheme shows an error that accumulates over time. Furthermore, 
from Figure \ref{figure6} we observe $Q^{n+1}-r^n\geq 0$, which implies that the original and modified energies remain consistent and decay monotonically.

\begin{figure}[h]
	\centering
	\subfigure[$\Delta t=1$]{
		\includegraphics[scale=0.16]{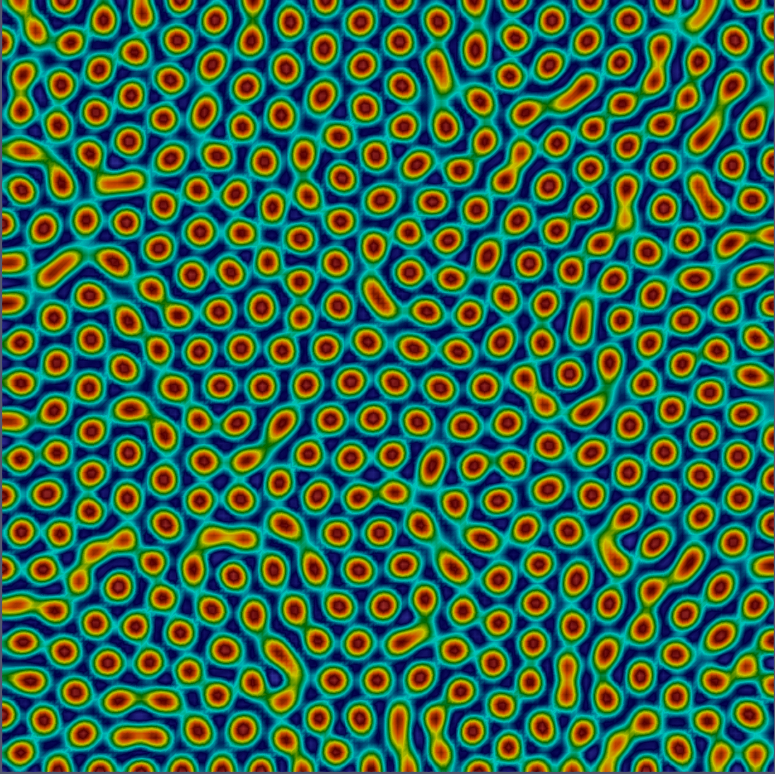}
	}
	\subfigure[$\Delta t=1/2$]{
		\includegraphics[scale=0.16]{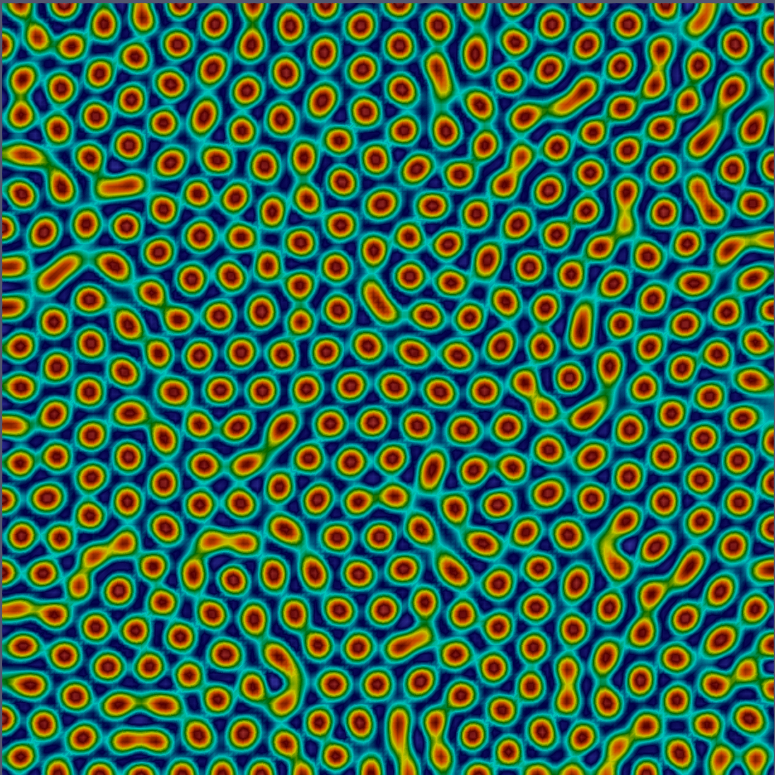}
	}
	\subfigure[$\Delta t=1/4$]{
		\includegraphics[scale=0.16]{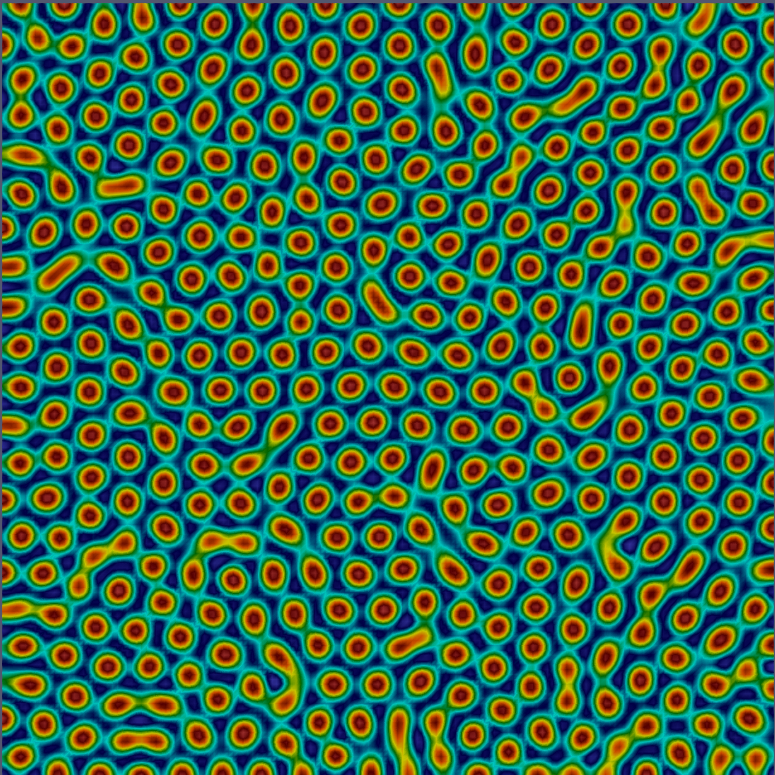}
	}
	\subfigure[$|r_{\text{sav}}-\sqrt{\int_\Omega F(\phi)d\bm{x}+C_0}|$]{
		\includegraphics[scale=0.16]{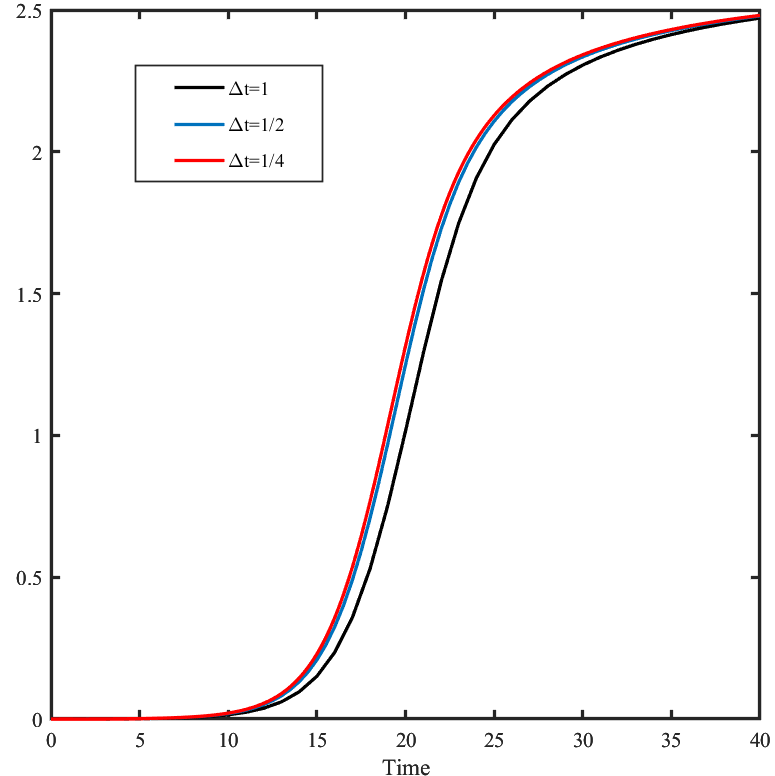}
	}
	
	\caption{Snapshots of the phase variable $\phi$ computed by the SAV–CN scheme at $t=40$. The line graphs give the discrepancy between the auxiliary variable and the original variable.}
	\label{figure4}
\end{figure}

\begin{figure}[h]
	\centering
	\subfigure[$\Delta t=1$]{
		\includegraphics[scale=0.16]{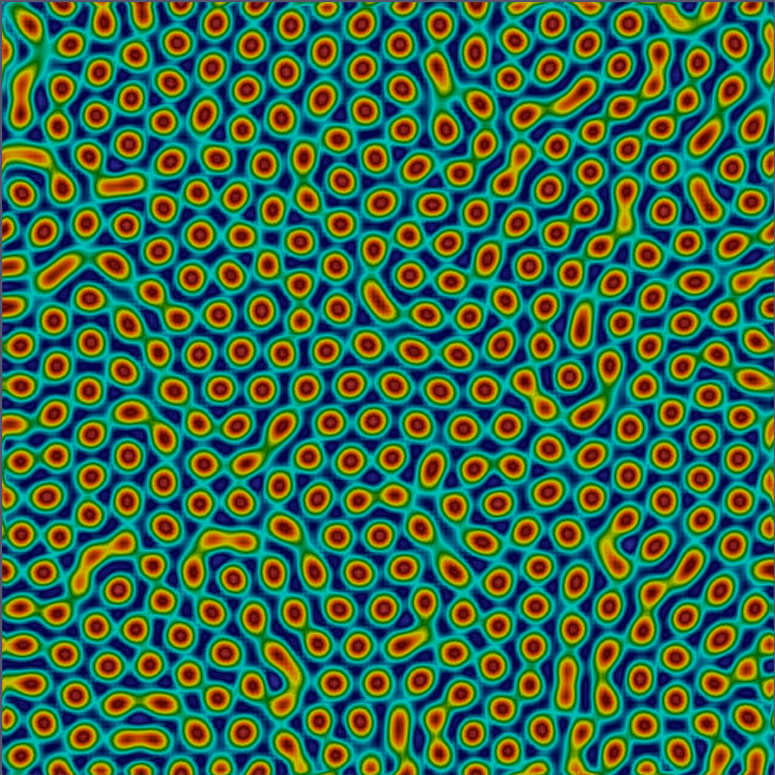}
	}
	\subfigure[$\Delta t=1/2$]{
		\includegraphics[scale=0.16]{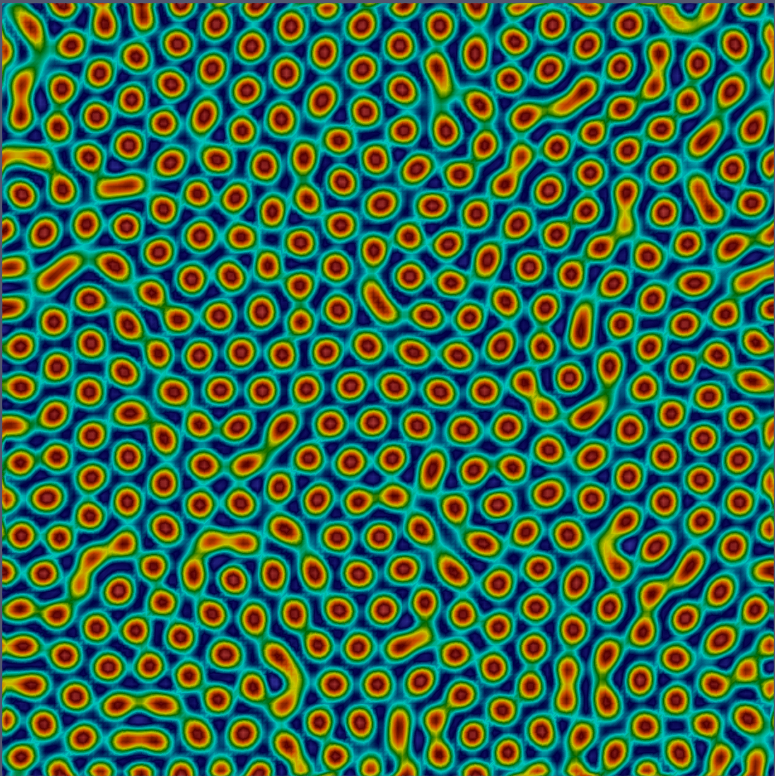}
	}
	\subfigure[$\Delta t=1/4$]{
		\includegraphics[scale=0.16]{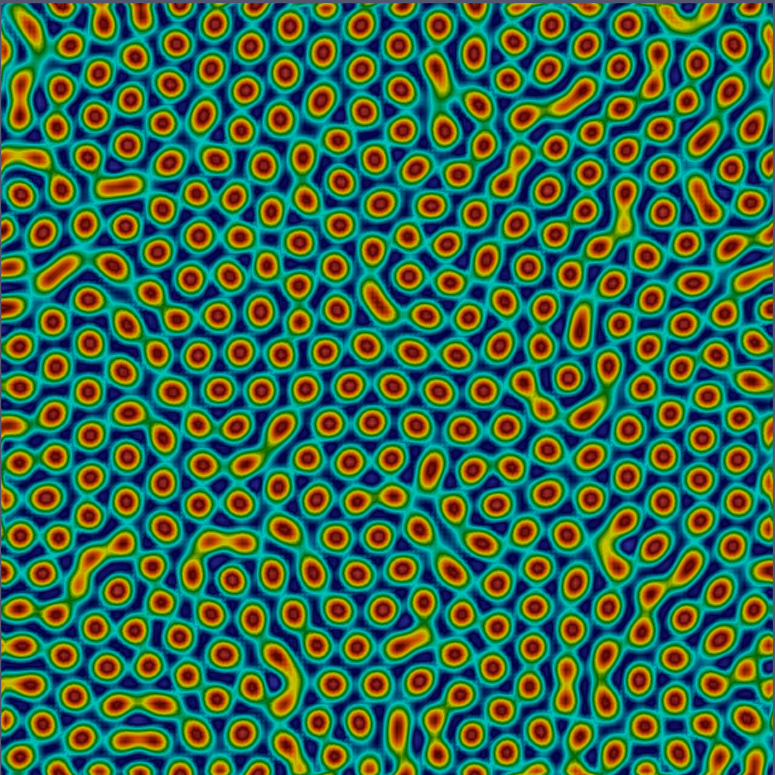}
	}
	\subfigure[$r_{\text{rav}}-0$]{
		\includegraphics[scale=0.16]{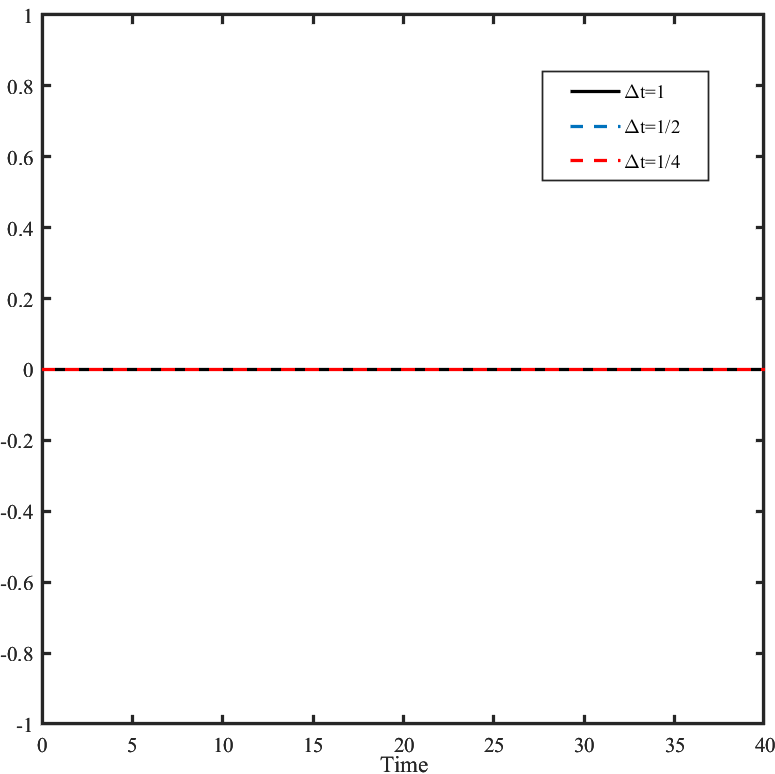}
	}
	
	\caption{Snapshots of the phase variable $\phi$ computed by the RAV scheme (\ref{the2-3}) at $t=40$. The line graphs give the discrepancy between the auxiliary variable and the original variable.}
	\label{figure5}
\end{figure}

\begin{figure}[H]
	\centering
	\subfigure[$(Q^{n+1}-r^n)$ vs. time]{
		\includegraphics[scale=0.38]{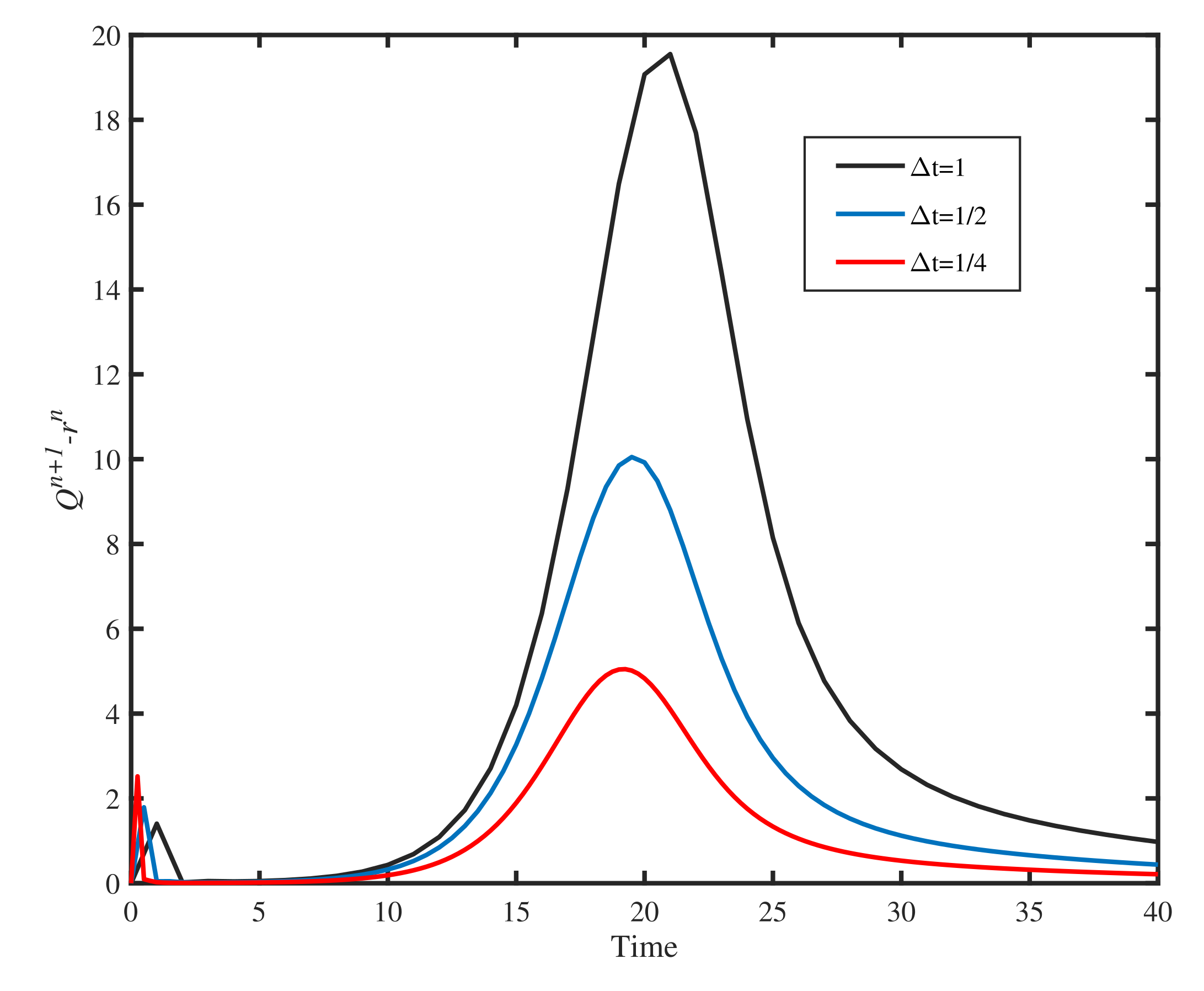}
	}
	\subfigure[Energy vs. time]{
		\includegraphics[scale=0.38]{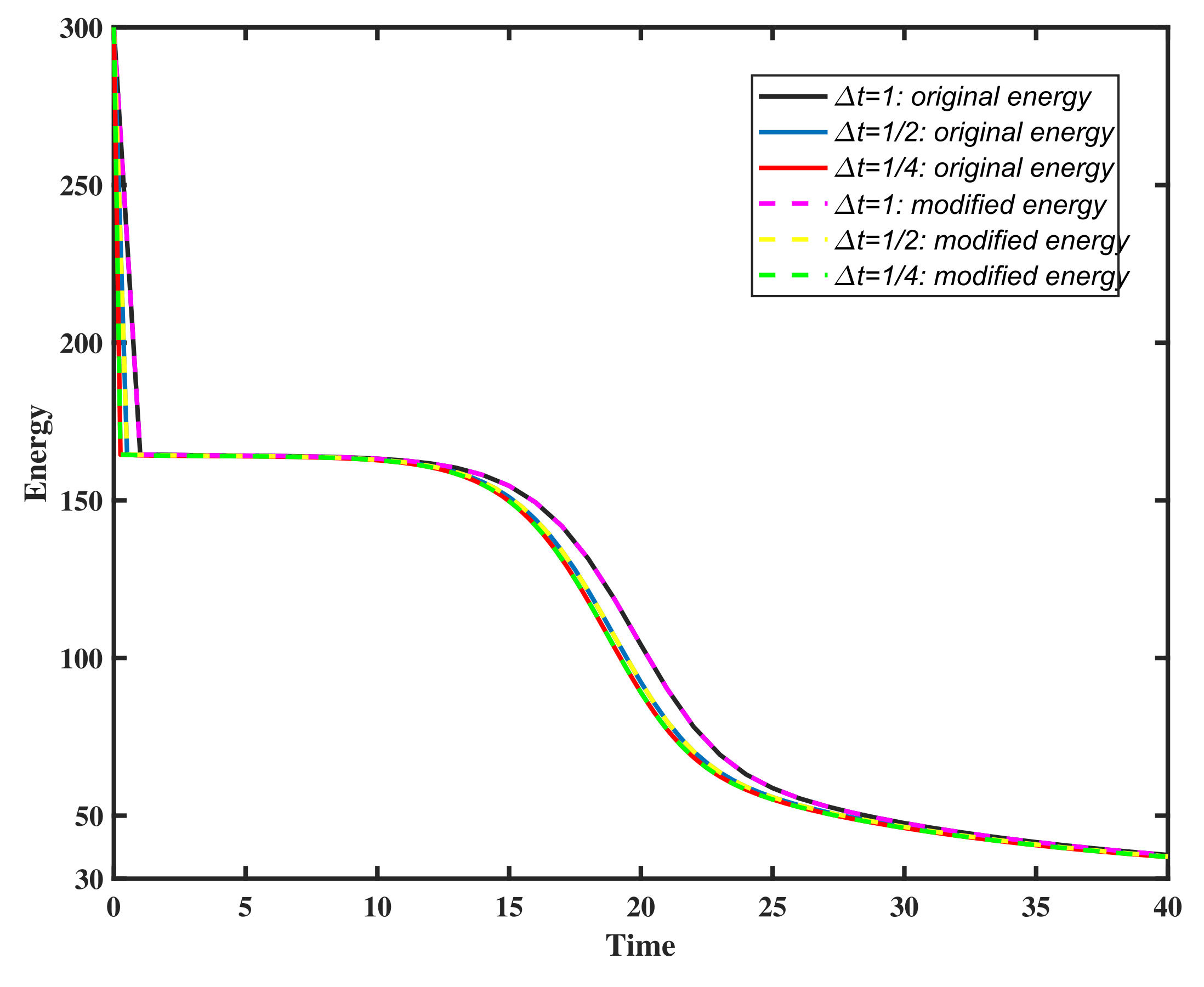}
	}
	\caption{Evolution of total energy for the RAV scheme with different time steps.}
	\label{figure6}
\end{figure}

\subsection{Phase-field vesicle model}
Vesicles in living organisms exhibit a variety of equilibrium shapes, and their mathematical modeling \cite{du2004phase} and simulation \cite{shen2022energy} have been extensively studied. Due to the strong nonlinearity of the model, achieving efficient and accurate simulation is challenging. In this subsection, the proposed RAV method is employed to simulate the morphological evolution of vesicles in two different environments.
The free energy of the phase-field vesicle model \cite{chen2015decoupled} is
\begin{equation}\label{the4-4}
\begin{split}
E[\phi]&=\frac{\lambda\epsilon}{2}\int_{\Omega}\left(|\Delta \phi|^2-\frac{2}{\epsilon^2}|\nabla\phi|^2+\frac{6}{\epsilon^2}\phi^2|\nabla\phi|^2+\frac{1}{\epsilon^4}(F'(\phi))^2\right) \ d\bm{x}\\
&+\frac{\lambda M_1}{2}(A(\phi)-A(\phi_0))^2+ \frac{\lambda M_2}{2}(B(\phi)-B(\phi_0))^2 ,
\end{split}
\end{equation}
where the bulk energy density is given by $F(\phi)=\frac{1}{4}(\phi^2-1)^2$. The parameter $\lambda$ denotes the surface tension coefficient, and $M_1$ and $M_2$ are penalty parameters. Note that $\frac{1}{2}A(\phi)$ and $\frac{3}{2\sqrt{2}}B(\phi)$ represent the volume and the surface area of the vesicle, where
\begin{equation}\nonumber
\begin{split}
A(\phi)=\int_\Omega (\phi+1) \ d\bm{x}, \ B(\phi)=\int_\Omega (\frac{\epsilon}{2}|\nabla\phi|^2+\frac{1}{\epsilon}F(\phi))\ d\bm{x}.
\end{split}
\end{equation}
Following the $L^2$ gradient flow (the Allen–Cahn dynamics), the governing system can be written as
\begin{equation}\label{the4-5}
\begin{split}
&\frac{\partial \phi}{\partial t}=-\mu,\\
&\mu=\lambda \epsilon\Delta^2\phi+\frac{2\lambda}{\epsilon}\Delta \phi+\frac{6\lambda}{\epsilon}(\phi|\nabla\phi|^2-\nabla\cdot(\phi^2\nabla\phi))+\frac{\lambda}{\epsilon^3}F'(\phi)F''(\phi)\\
&+\lambda M_1(A(\phi)-A(\phi_0))+\lambda\epsilon M_2(B(\phi)-B(\phi_0))(-\Delta\phi+\frac{1}{\epsilon^2}F'(\phi)).
\end{split}
\end{equation}
The computational domain is set as $\Omega=[0,2\pi]^2$ with the homogeneous Neumann boundary condition. Model parameters are configured with $\lambda=1e-3$, $M_1=M_2=5e4$, and $\epsilon=0.1$. The initial condition of the elliptical vesicle is specified by 
\begin{equation}\nonumber
\begin{split}
\phi_0(x,y)=\text{tanh}\left(\frac{0.35\pi-\sqrt{(x-\pi)^2/0.35+(y-\pi)^2/1.5}}{\sqrt{2}\epsilon}\right).
\end{split}
\end{equation}

We first consider vesicle evolution under the constraints of volume and surface area conservation. As shown in Figure \ref{figure7}, the vesicle evolves from its initial elliptical profile into a contracted red blood cell (RBC) morphology. Furthermore, Figure \ref{figure8} demonstrates that the volume and surface area are conserved during the entire evolution, and the dynamics satisfy the energy dissipation law. We then simulate the vesicle evolution under an imposed shear flow. The advection term $\bm{u}\cdot \nabla\phi$ is incorporated into equation (\ref{the4-5}), where $\bm{u}=\left(0.1(y-\pi), 0\right)$. As depicted in Figure \ref{figure9}, the initially elliptical vesicle exhibits typical shear‑induced deformation, including noticeable tilting and stretching, while its volume and surface area are approximately preserved during the evolution.

\begin{figure}[H]
\centering
	\subfigure[$t=0$]{
		\includegraphics[scale=0.16]{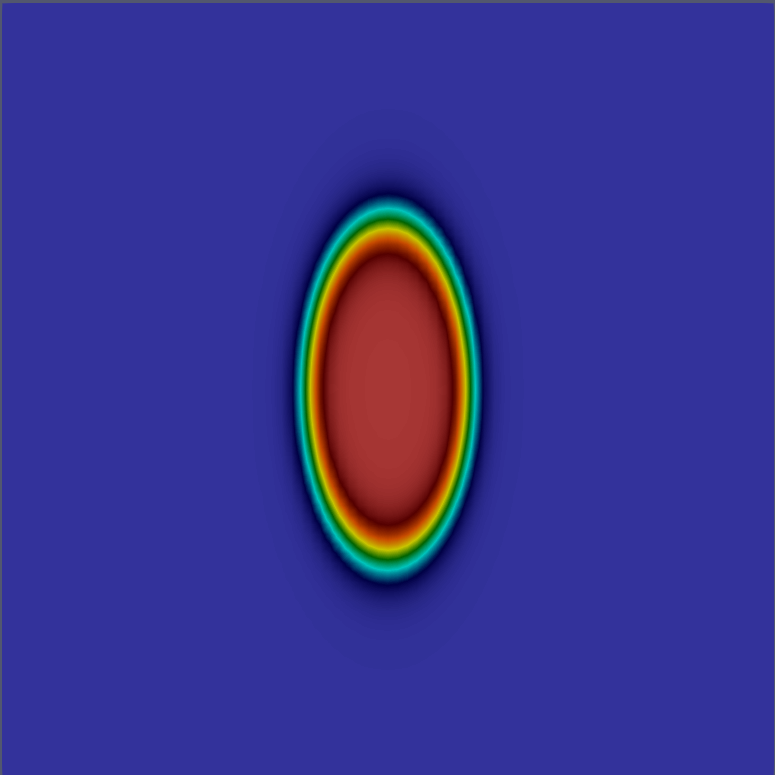}
	}
	\subfigure[$t=20$]{
		\includegraphics[scale=0.16]{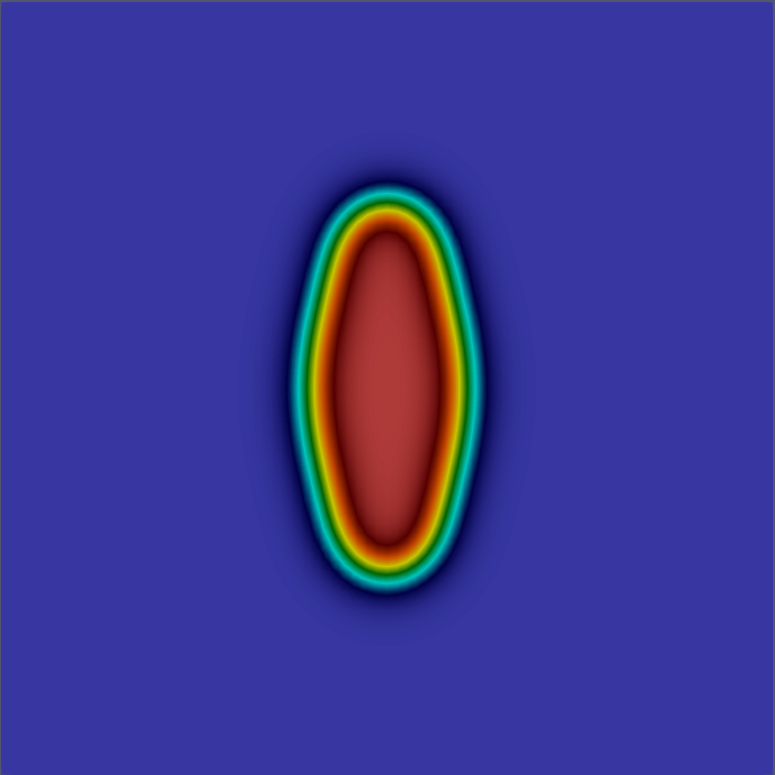}
	}
	\subfigure[$t=100$]{
		\includegraphics[scale=0.16]{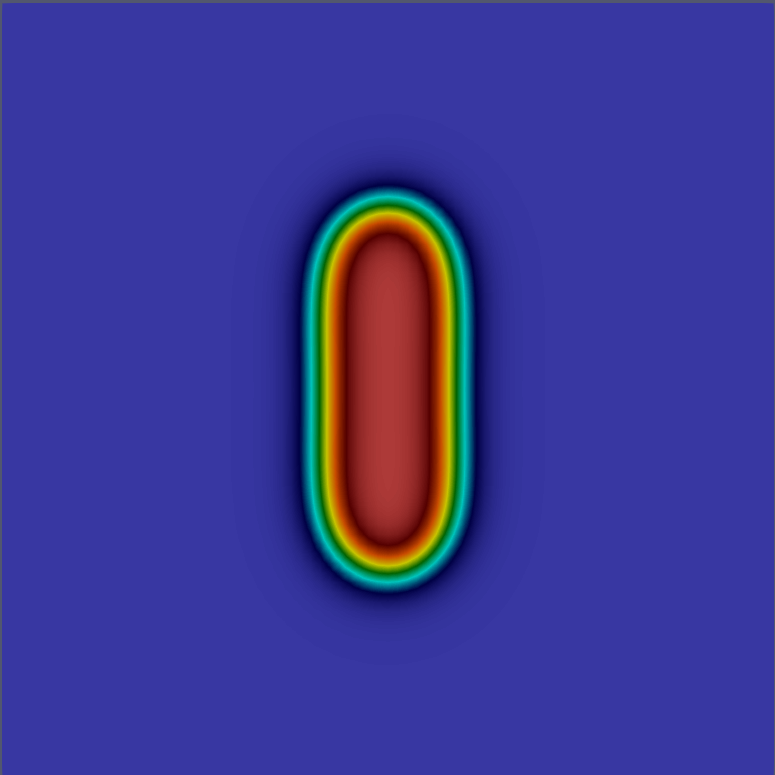}
	}
	\subfigure[$t=150$]{
		\includegraphics[scale=0.16]{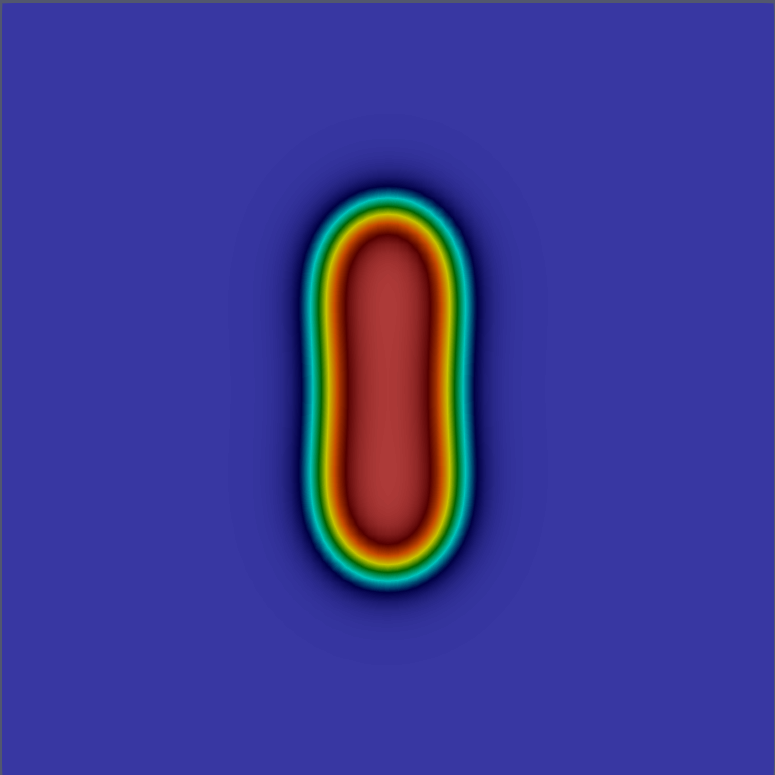}
	}
	
\caption{Evolution of a vesicle at different times.}
\label{figure7}
\end{figure}

\begin{figure}[H]
\centering
	\subfigure[Volume Difference (VD) vs. time]{
		\includegraphics[scale=0.29]{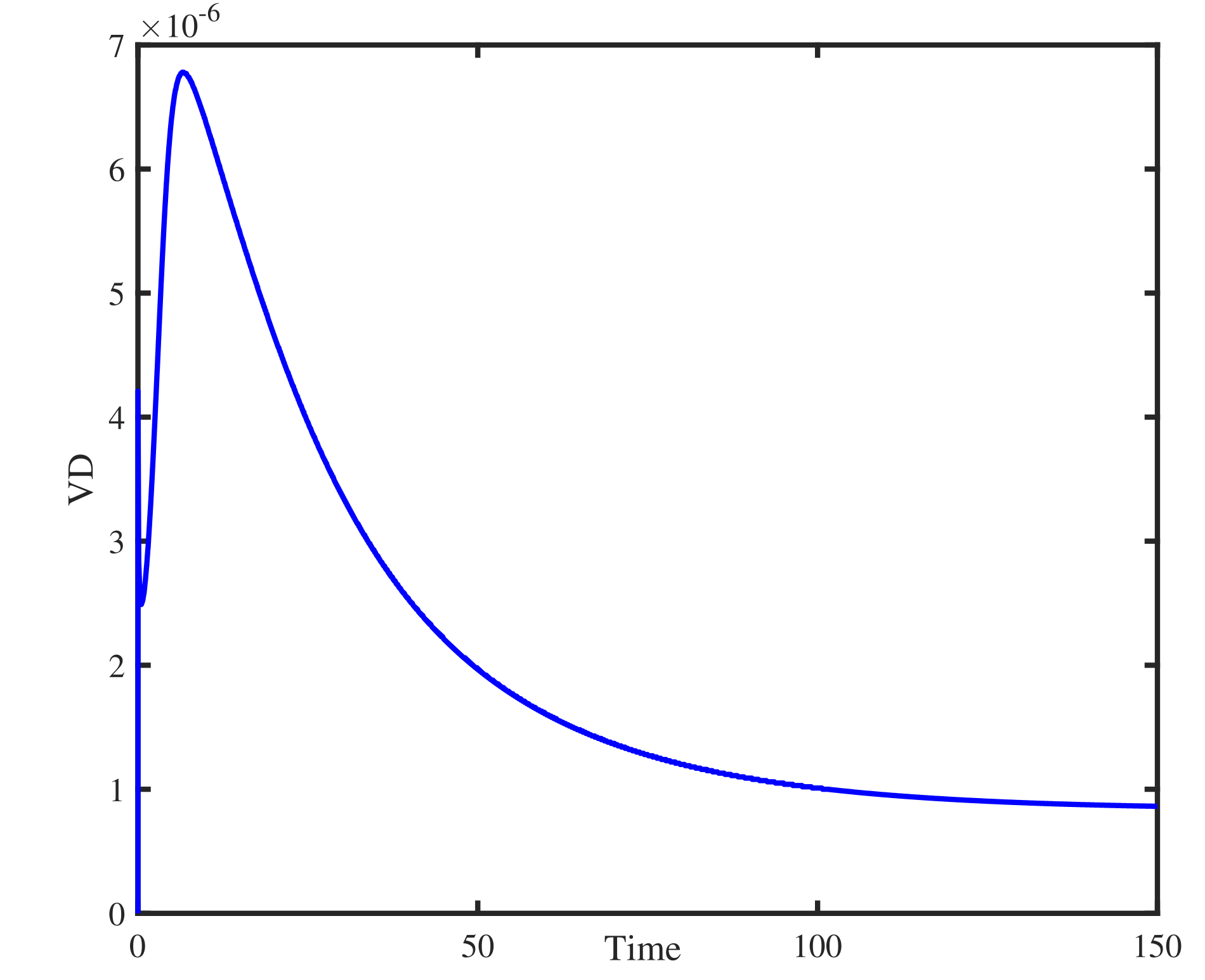}
	}
	\subfigure[Surface Area Difference (SAD) vs. time]{
		\includegraphics[scale=0.29]{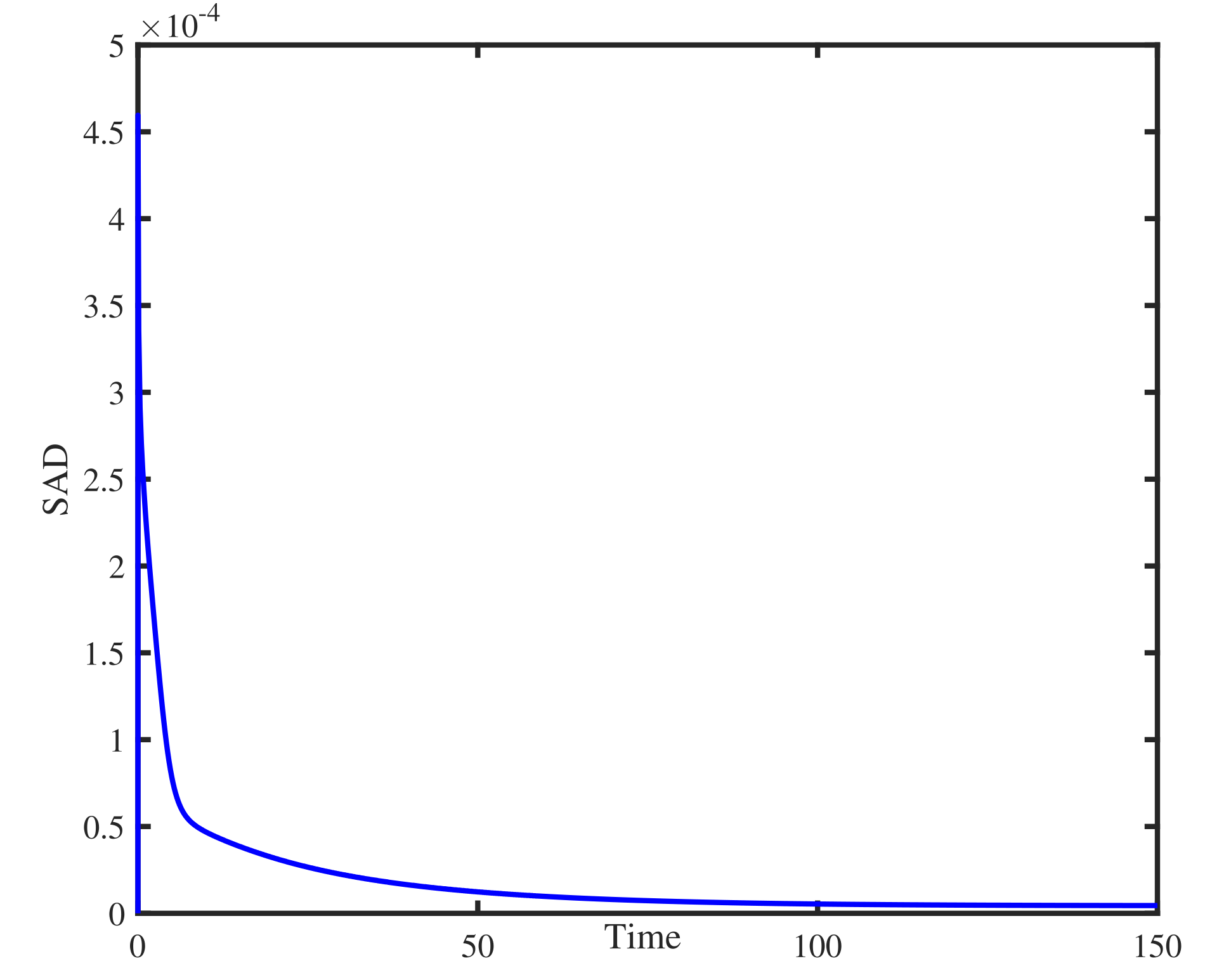}
	}
	\subfigure[Energy vs. time]{
		\includegraphics[scale=0.29]{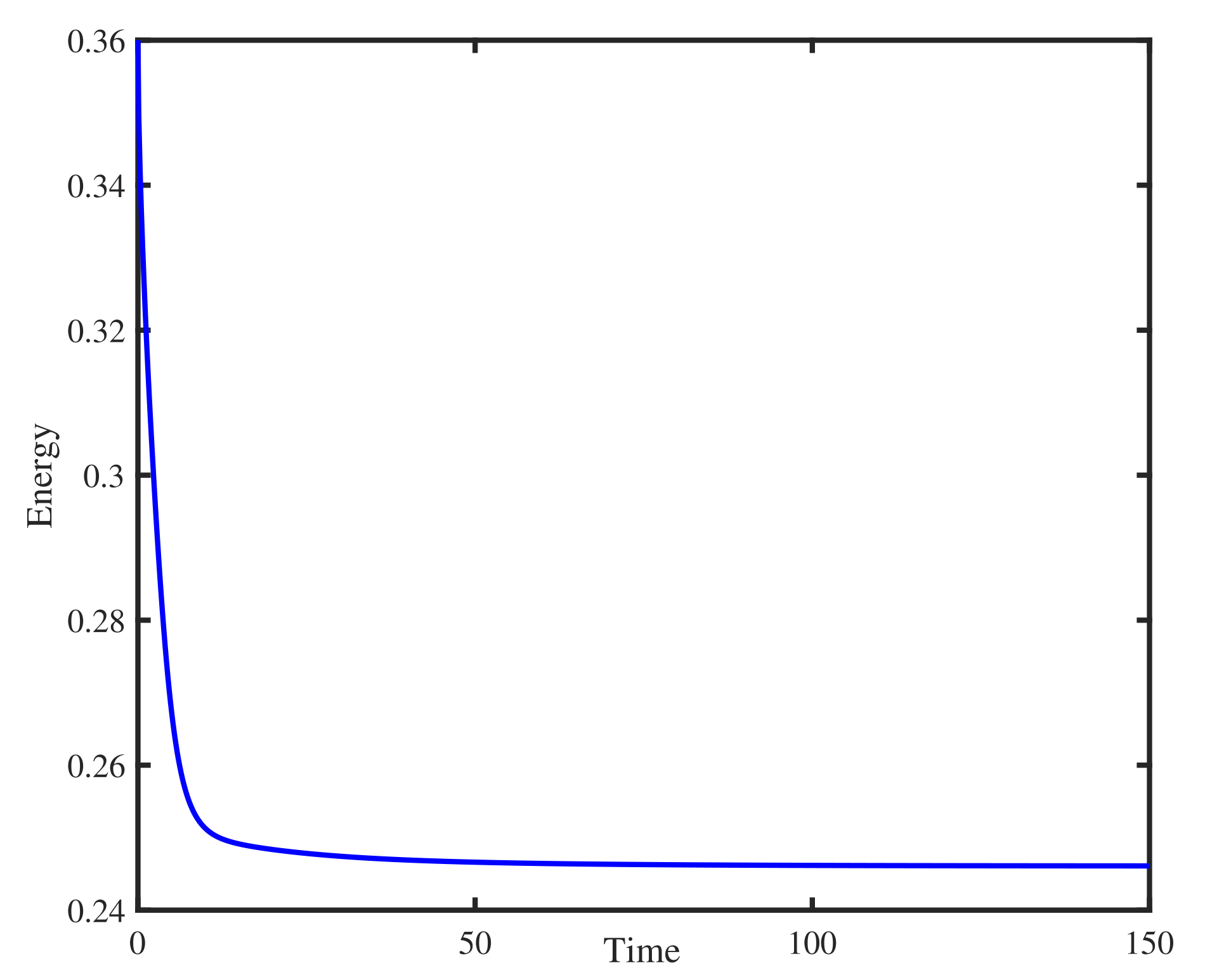}
	}
	
\caption{Volume difference, surface area difference, and energy evolution curves.}
\label{figure8}
\end{figure}

\begin{figure}[H]
\centering
	\subfigure[$t=1.5$]{
		\includegraphics[scale=0.16]{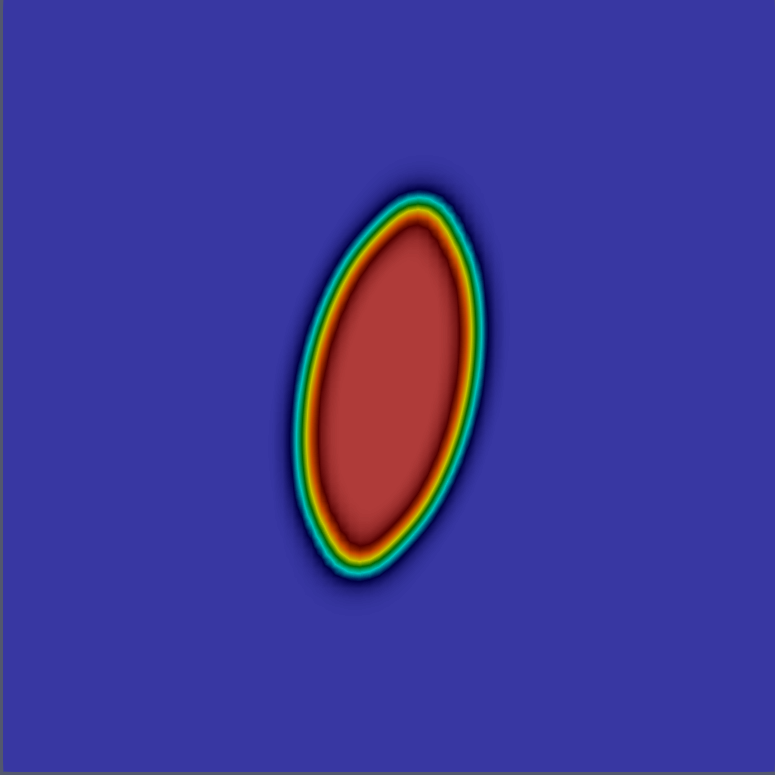}
	}
	\subfigure[$t=3.5$]{
		\includegraphics[scale=0.16]{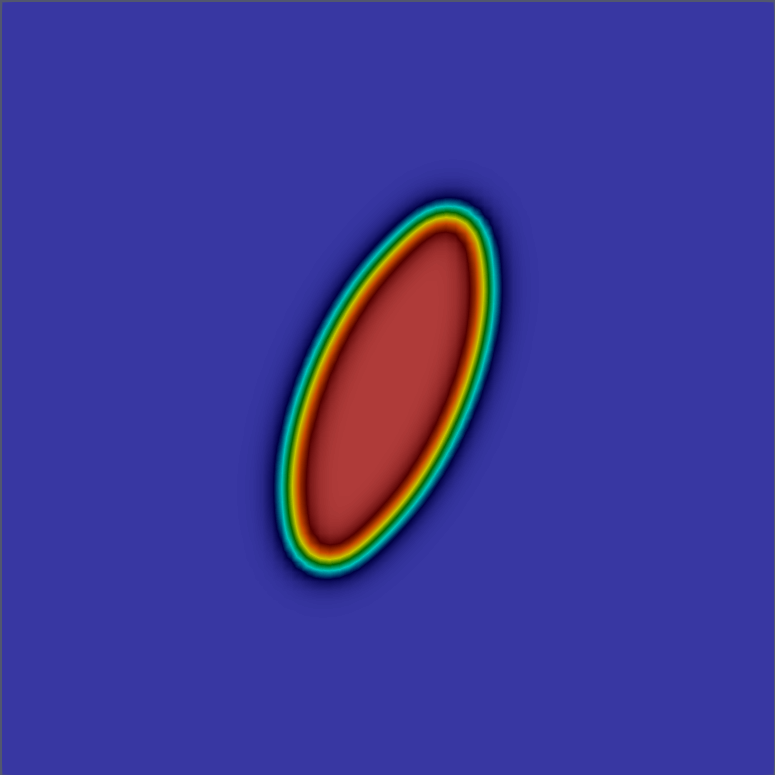}
	}
	\subfigure[$t=6.5$]{
		\includegraphics[scale=0.16]{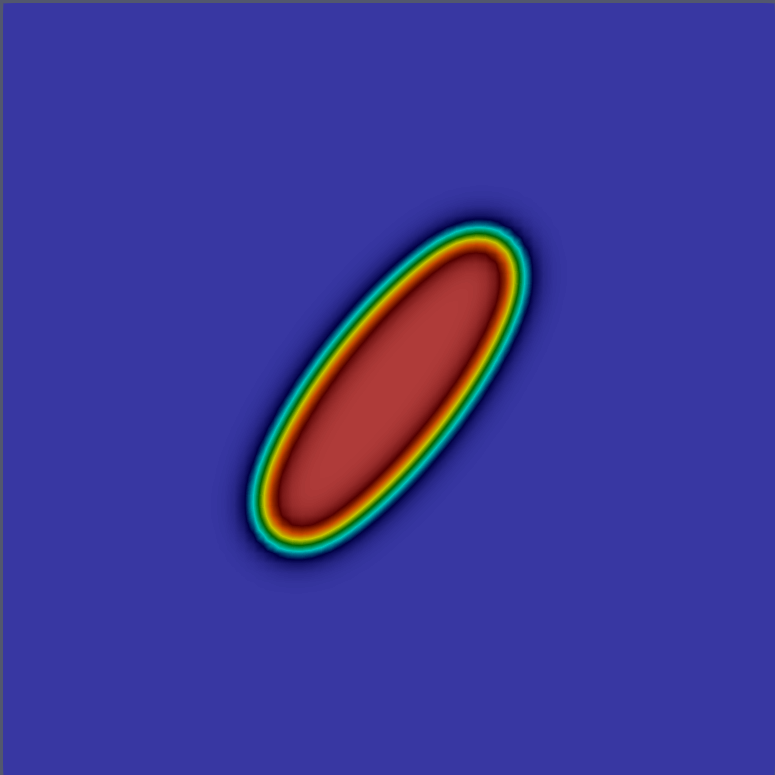}
	}
	\subfigure[VD and SAD vs. time]{
		\includegraphics[scale=0.16]{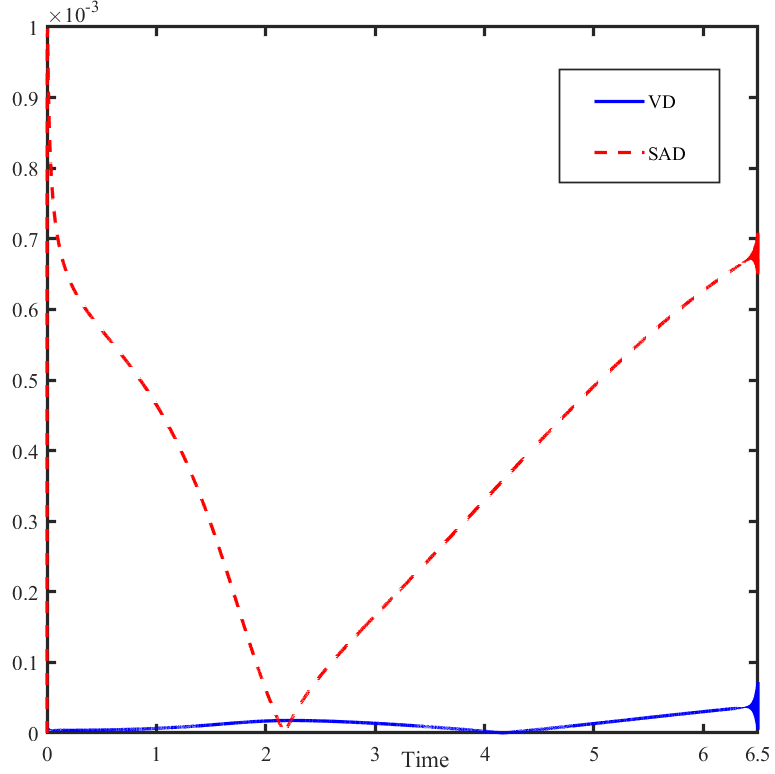}
	}
	
\caption{Evolution of a vesicle at different times.}
\label{figure9}
\end{figure}

\subsection{Phase-field surfactant model}
Two-phase systems with surfactants have extensive applications in scientific and industrial fields, such as oil recovery\cite{spinler2000enhancement} and food processing \cite{myers2020surfactant}, etc. This example focuses on the phase‑field surfactant model \cite{xu2021efficient}, which is defined by the total energy functional
\begin{equation}\label{the4-6}
\begin{split}
E[\phi] = \int_\Omega \left(\frac{1}{2}|\nabla\phi|^2+F(\phi)+\frac{1}{2}|\nabla\rho|^2+G(\rho)-\frac{\gamma_1}{2}\rho|\nabla\phi|^2+\frac{\gamma_2}{4}|\nabla\phi|^4\right) \ d\bm{x},
\end{split}
\end{equation}
where $F(\phi)=\frac{1}{4\epsilon^2}(\phi^2-1)^2$ and $G(\rho)=\frac{1}{4\delta^2}\rho^2(\rho-1)^2$. By the energy variational method, we obtain the coupled surfactant model, which reads as
\begin{equation}\label{the4-7}
\begin{split}
&\frac{\partial \phi}{\partial t}=M_{\phi}\Delta \mu_\phi,\\
&\mu_\phi=-\Delta \phi+F'(\phi)+\gamma_1 \nabla\cdot\left(\rho\nabla\phi \right)-\gamma_2 \nabla\cdot\left(|\nabla\phi|^2\nabla\phi\right), \\
&\frac{\partial \rho}{\partial t}=M_{\rho} \Delta \mu_\rho,\\
&\mu_\rho=-\Delta \rho+G'(\rho)-\frac{\gamma_1}{2}|\nabla\phi|^2.
\end{split}
\end{equation}
The computational domain $\Omega=[0,2\pi]^2$ with the homogeneous Neumann boundary condition. The parameters $M_{\phi}=M_{\rho}=2e-3$, $\epsilon=\delta=0.08$, $\gamma_1=0.5$ and $\gamma_2=1e-4$.

We investigate the spinodal decomposition dynamics of a homogeneous binary mixture that is quenched into the unstable domain of its miscibility gap. Two different random fields are used as the initial conditions, corresponding to $\phi_0(x,y)=0.01\text{Rand}_1(x,y)$ and $\rho_0(x,y)=0.2+0.01\text{Rand}_2(x,y)$. The $\text{Rand}(\cdot)$ is the random number in $[-1,1]$ and has zero mean. We set the time step $\Delta t=0.01$ for the simulation. As shown in Figure \ref{figure10}, the concentration variable $\phi$ evolves from an initially disordered state into well‑defined phase‑separated regions, while the concentration variable $\rho$ is driven toward the interfaces of $\phi$ and accumulates along them. Moreover, the energy decays over time, and the original and auxiliary variables remain highly consistent, which indicates the accuracy of the simulation.

\begin{figure}[H]
\centering
	\subfigure[$t=0.1$]{
		\includegraphics[scale=0.17]{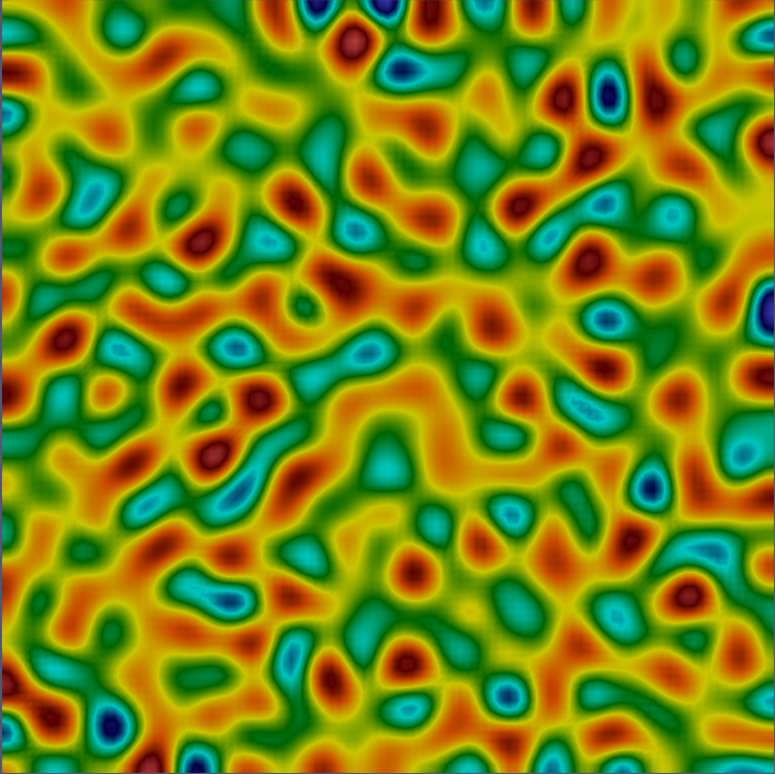}
		\includegraphics[scale=0.17]{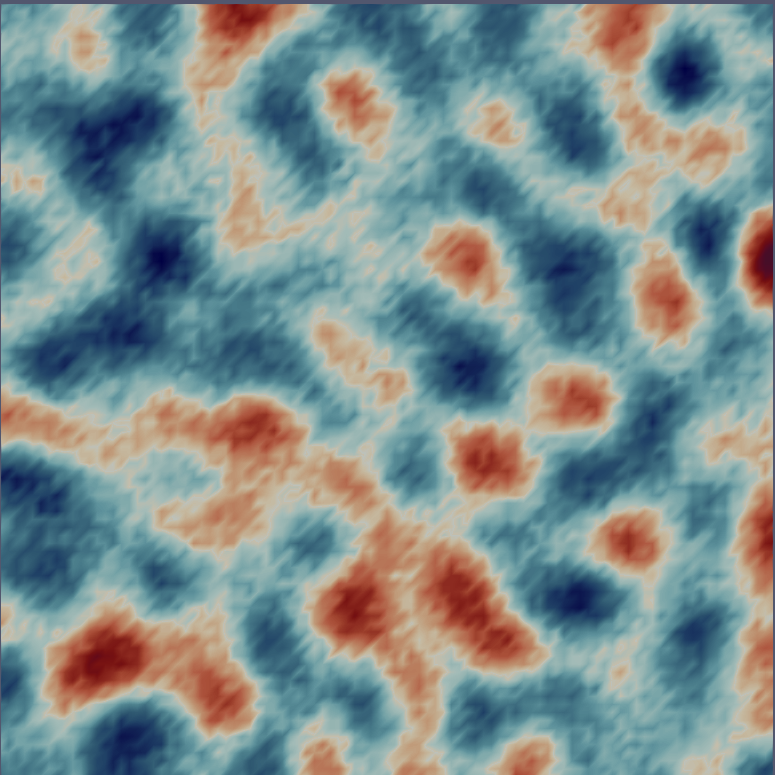}
	}
	\subfigure[$t=3.0$]{
		\includegraphics[scale=0.17]{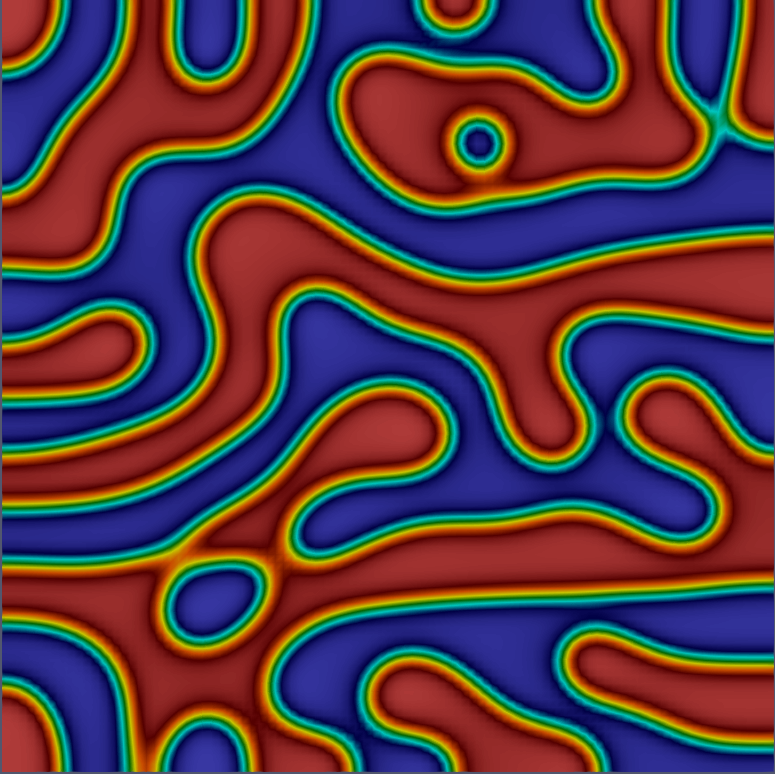}
		\includegraphics[scale=0.17]{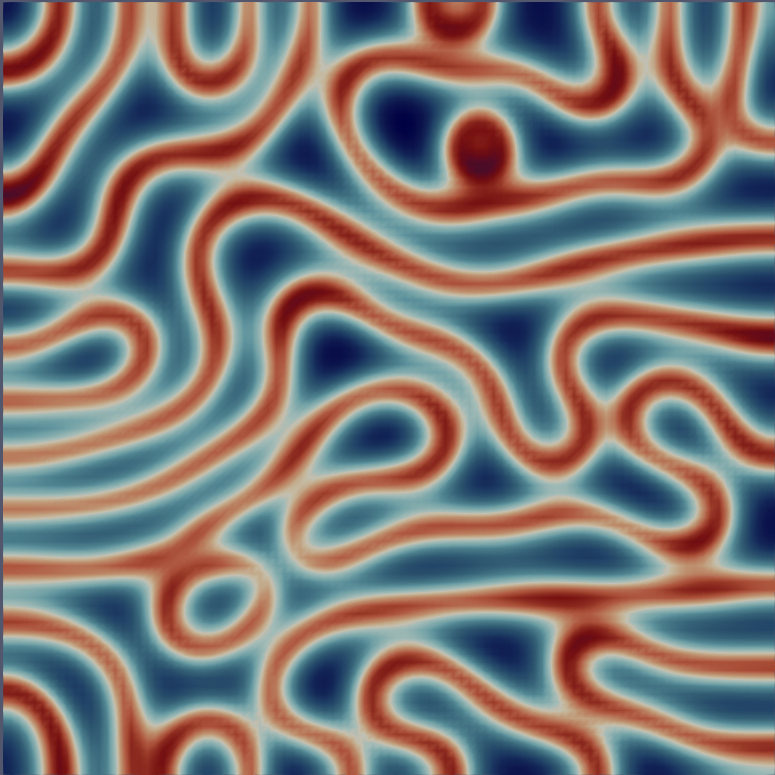}
	}
	\subfigure[$t=5.0$]{
		\includegraphics[scale=0.17]{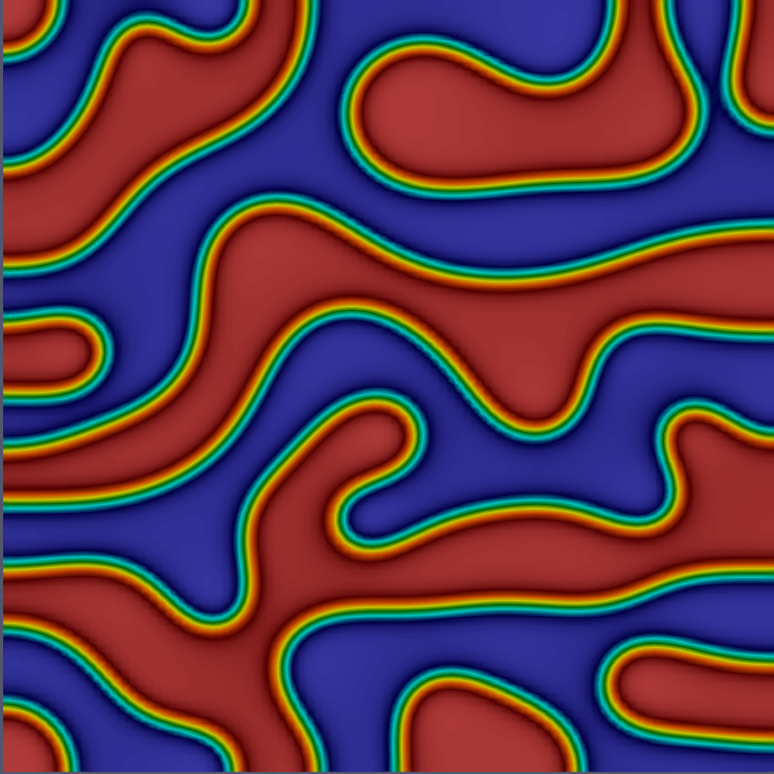}
		\includegraphics[scale=0.17]{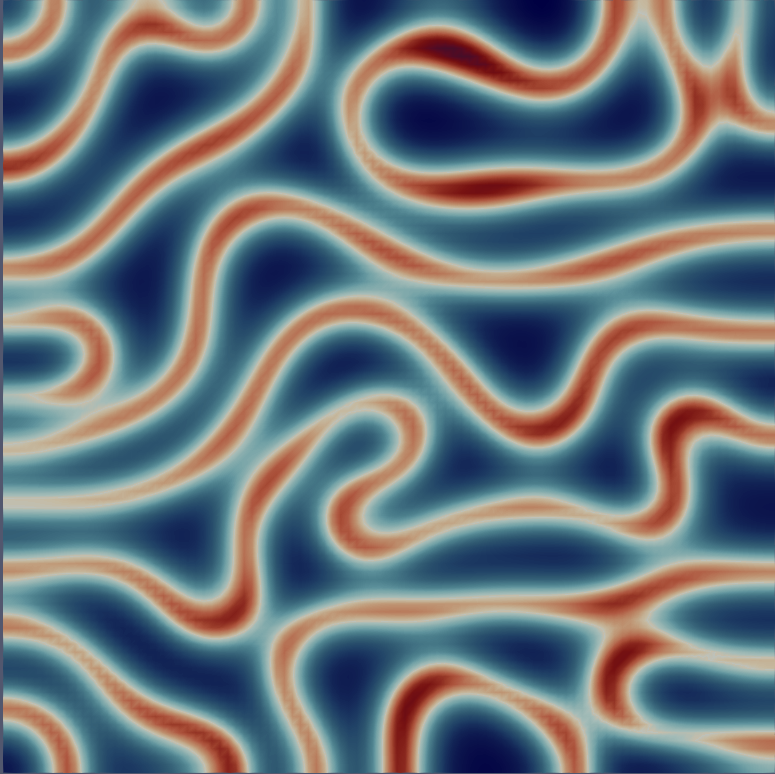}
	}
	\subfigure[Energy and $r-0$ vs. time]{
		\includegraphics[scale=0.17]{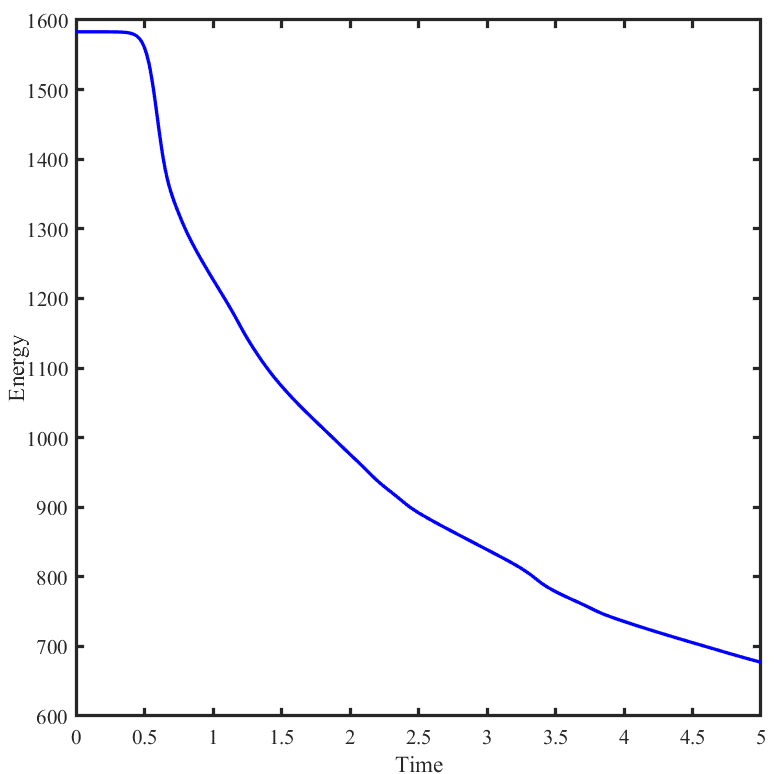}
		\includegraphics[scale=0.17]{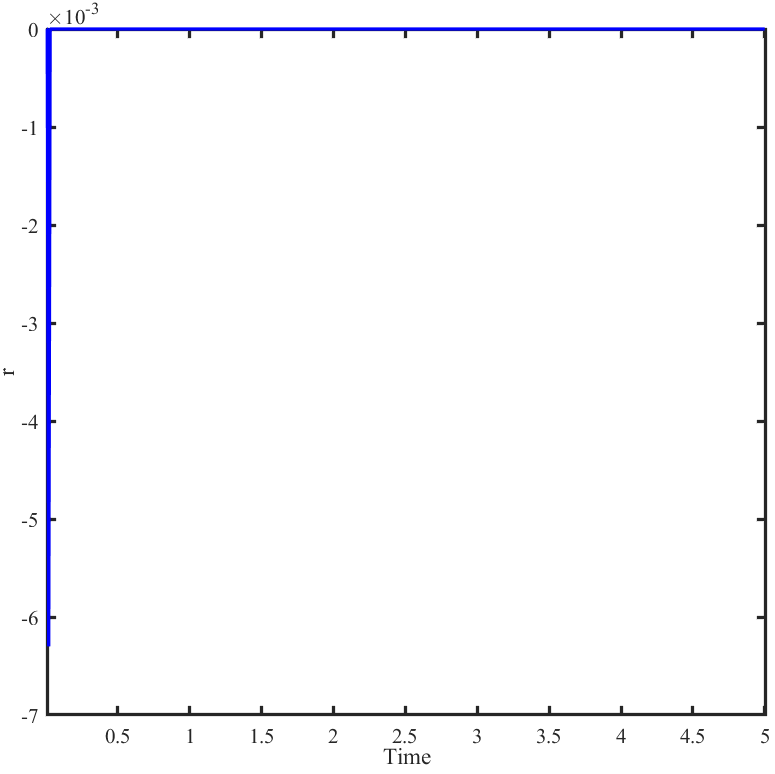}
	}
	
\caption{Time evolution of spinodal decomposition and Energy dissipation.}
\label{figure10}
\end{figure}

\section{Conclusions and remarks}
\label{section5}
In this work, we presented a regularized auxiliary variable method to deal with the inconsistency between the auxiliary system and the original system in numerical computation of existing auxiliary variable methods, thus improving the accuracy and stability of the numerical solutions. The RAV scheme is unconditionally energy stable, and it provides an intuitive characterization of the relation between the modified energy and the original energy (Theorem \ref{theorem2-2}). More importantly, we establish for the first time a rigorous error estimate for this auxiliary variable scheme without imposing any restriction on the time step, while retaining the same computational cost as the conventional IMEX scheme.

We would like to remark that the RAV approach is not limited to gradient flows. It can be applied to a wide class of nonlinear dissipative systems, such as the Navier–Stokes equations \cite{li2022new}, multiphase flow problems \cite{li2025class}, and kinetic equations \cite{zhang2025sav}. These extensions will be explored in future work.

\section*{Acknowledgments}
Z. Wang and P. Lin are partially supported by the National Natural Science Foundation of China under Grant Nos. 12501535, 12371388, and by the Beijing Natural Science Foundation under Grant No. IS25005.

\bibliographystyle{elsarticle-num}
\bibliography{Ref}

\end{document}